\numberwithin{equation}{section}
\numberwithin{figure}{section}
\theoremstyle{plain}
\newtheorem{thm}{\protect\theoremname}
  \theoremstyle{definition}
  \newtheorem{defn}[thm]{\protect\definitionname}
  \theoremstyle{remark}
  \newtheorem{rem}[thm]{\protect\remarkname}
  \theoremstyle{definition}
  \newtheorem{example}[thm]{\protect\examplename}
  \theoremstyle{plain}
  \newtheorem{lem}[thm]{\protect\lemmaname}
  \theoremstyle{plain}
  \newtheorem{cor}[thm]{\protect\corollaryname}
  \theoremstyle{plain}
  \newtheorem{prop}[thm]{\protect\propositionname}
\numberwithin{thm}{section}
  \providecommand{\corollaryname}{Corollary}
  \providecommand{\definitionname}{Definition}
  \providecommand{\examplename}{Example}
  \providecommand{\lemmaname}{Lemma}
  \providecommand{\propositionname}{Proposition}
  \providecommand{\remarkname}{Remark}
\providecommand{\theoremname}{Theorem}
\begin{document}

\title[Invariant operators ]{Operators invariant relative to a completely nonunitary contraction}

\author{H. Bercovici and D. Timotin}

\address{Mathematics Department,\\
Indiana University,\\
Bloomington, IN 47405, USA}

\email{bercovic@indiana.edu}

\address{Simion Stoilow Institute of Mathematics\\
Romanian Academy,\\
Calea Grivi\c tei 21,\\
Bucharest, Romania}

\email{dan.timotin@imar.ro}

\thanks{HB was supported in part by a grant of the National Science Foundation.}

\subjclass[2000]{Primary: 47A45, Secondary: 47B35}

\keywords{completely nonunitary contraction, characteristic function, truncated Toeplitz operator, conjugation}
\begin{abstract}
Given a contraction $A$ on a Hilbert space $\mathcal{H}$, an operator
$T$ on $\mathcal{H}$ is said to be $A$-invariant if $\langle Tx,x\rangle=\langle TAx,Ax\rangle$
for every $x\in\mathcal{H}$ such that $\|Ax\|=\|x\|$. In the special
case in which both defect indices of $A$ are equal to $1$, we show
that every $A$-invariant operator is the compression to $\mathcal{H}$
of an unbounded linear transformation that commutes with the minimal
unitary dilation of $A$. This result was proved by Sarason under
the additional hypothesis that $A$ is of class $C_{00}$, leading
to an intrinsic characterization of the truncated Toeplitz operators.
We also adapt to our more general context other results about truncated
Toeplitz operators.
\end{abstract}

\maketitle

\section{Introduction}

Suppose that $A$ is a completely nonunitary contraction acting on
a Hilbert space $\mathcal{H}$ and $U$ is the minimal unitary dilation
of $A$ acting on $\mathcal{K}\supset\mathcal{H}$. Thus, $A^{n}=P_{\mathcal{H}}U^{n}|\mathcal{H}$
is the compression of $U^{n}$ to $\mathcal{H}$ for every positive
integer $n$. It is of interest to consider operators
of the form $P_{\mathcal{H}}X|\mathcal{H}$, where $X$ is in the
commutant $\{U\}'$ of $U$. The commutant lifting theorem \cite{sar-H-infty,harmonic}
shows that every element of $\{A\}'$ is of this form. When $A$ is
the unilateral shift on the Hardy space $H^{2}$, the collection $\{P_{\mathcal{H}}X|\mathcal{H}:X\in\{U\}'\}$
consists precisely of the Toeplitz operators on $H^{2}$. When $A$
is an operator of class $C_{00}$ with defect indices equal to $1$,
the collection $\{P_{\mathcal{H}}X|\mathcal{H}:X\in\{U\}'\}$ is hard
to characterize intrinsically. However, a larger collection, obtained
by considering closed unbounded linear transformations $X$ that commute
with $U$, has been identified in \cite{sar-TTO} with the class of
those bounded operators $Y$ on $\mathcal{H}$ that are $A$-invariant
in the sense that they satisfy the identity
\[
\langle YAx,Ax\rangle=\langle Yx,x\rangle
\]
for every vector $x\in\mathcal{H}$ such that $\|Ax\|=\|x\|$. Of
course, operators $A$ of the type just described can be identified
up to unitary equivalence with compressions of the unilateral shift
to co-invariant subspaces, and the class of operators $Y$ described
above is in that case the class of truncated Toeplitz operators \cite{sar-TTO}.

Our purpose in this paper is to consider arbitrary operators $A$
with defect indices equal to $1$ and the class of bounded operators
on $\mathcal{H}$ that can be obtained as compressions of (possibly)
unbounded linear transformations that commute with $U$. We call these
operators \emph{truncated multiplication operators} and we show, in
particular, that operators in this class are characterized by the
fact that they are $A$-invariant. Operators $A$ with defect indices
equal to $1$ are always complex symmetric and, in the $C_{00}$ case,
it is known \cite{sar-TTO} that the corresponding $A$-invariant
operators satisfy the same complex symmetry. This result no longer
persists if $A$ is not of class $C_{00}$. In this case, the complex
symmetric truncated $A$-invariant operators belong, roughly speaking,
to the linear space generated by $\{A\}'$ and $\{A^{*}\}'$. 

The remainder of the paper is organized as follows. Section \ref{sec:Preliminaries}
contains a description of the functional models of contractions with
defect indices equal to one, as well as the definition of truncated
multiplication operators and their symbols in this context. In Section
\ref{sec:TTO=00003Dinv}, we characterize the class of truncated multiplcation
operators by $A$-invariance. The main result of Section \ref{sec:Nonuniqueness-of-the symbol}
establishes the extent to which the symbol of an $A$-invariant operator
is uniquely determined. In Section \ref{sec:An-analog-of Crofoot}
we describe some useful and explicit unitary equivalences between
model spaces. Finally, in Section \ref{sec:Complex-symmetries} we
discuss complex symmetries, in particular the decomposition of $A$-symmetric
operators into complex symmetric and complex skew-symmetric summands.

\section{Preliminaries\label{sec:Preliminaries}}

We denote by $\mathbb{C}$ the complex plane, by $\mathbb{D}=\{\lambda\in\mathbb{C}:|\lambda|<1\}$
the open unit disk, by $\mathbb{T}=\partial\mathbb{D}$ the unit circle,
and by $\chi$ the identity function $\chi(\lambda)=\lambda$. Normalized
arclength defines a Borel probability measure $m$ on $\mathbb{T}$,
$L^{p}$ stands for the corresponding space $L^{p}(\mathbb{T},m)$,
and $H^{p}\subset L^{p}$ is the Hardy space for $p\in[1,+\infty]$.
We recall that an element $h\in H^{p}$ can also be considered to
be an analytic function on $\mathbb{D}$, and the values of $u$ on
$\mathbb{T}$ can be recovered as radial limits almost everywhere
with respect to $m$.

As noted in the introduction, we focus on contractions $A$ acting
on a Hilbert space $\mathcal{H}$ with the property that the operators
$I_{\mathcal{H}}-A^{*}A$ and $I_{\mathcal{H}}-AA^{*}$ have rank
equal to one. In a different terminology, $T$ has \emph{defect indices}
$1$ and $1$, where the defect indices are a measure of how far $A$
and $A^{*}$ are from being isometric. In addition, we impose the
condition that $A$ has no nonzero reducing subspace $\mathcal{K}$
with the property that the restriction $A|\mathcal{K}$ is a unitary
operator. In other words, $A$ is supposed to be \emph{completely
nonunitary}. 

Sz.-Nagy and Foias have developed a functional model for completely
nonunitary contractions, showing for instance that such a contraction
$A$ is uniquely determined, up to unitary equivalence, by a purely
contractive analytic function $\Theta_{A}$ whose values are operators
between two Hilbert spaces with dimensions equal to the defect indices
of $A$. The function $\Theta_{A}$ is called the \emph{characteristic
function }of $A$, and it plays an analogous role to that of the characteristic
matrix of a linear operator on a finite dimensional space. In our
case, the defect indices are both equal to $1$, so the characteristic
function of $A$ can be thought of simply as a function $u\in H^{\infty}$
such that $\|u\|_{\infty}\le1$. Such a function is purely contractive
precisely when $|u(0)|<1$, that is, when $u$ is not identically
equal to a constant of modulus one. Thus, throughout this paper, we
work with a purely contractive function $u\in H^{\infty}$. When the
characteristic function of $A$ is an inner function in $H^{\infty}$,
the minimal unitary dilation of $A$ is a bilateral shift, and this
allows for the construction of a particularly simple functional model
for $A$. In our more general setting, this dilation is a unitary
operator with spectral multiplicity at most $2$.

We now describe the functional model associated to a given purely
contractive function in $H^{\infty}$. Fix $u\in H^{\infty}$ such
that $\|u\|_{\infty}\le1$ and $|u(0)|<1$, and define the function
$\Delta\in L^{\infty}$ by
\[
\Delta(\zeta)=(1-|u(\zeta)|^{2})^{1/2},\quad\zeta\in\mathbb{T}.
\]
Using this function, we construct spaces
\[
\mathbf{K}=L^{2}\oplus(\Delta L^{2})^{-},\quad\mathbf{K}_{+}=H^{2}\oplus(\Delta L^{2})^{-},\quad\mathbf{G}=\{uf\oplus\Delta f:f\in H^{2}\},
\]
and finally,
\[
\mathbf{H}_{u}=\mathbf{K}_{+}\ominus\mathbf{G}.
\]
Note for further use that a function $f\oplus g\in\mathbf{K}_{+}$
belongs to $\mathbf{H}_u$ if and only if 
\[
\overline{u}f+\Delta g\in L^{2}\ominus H^{2}.
\]
 We define now operators $U\in\mathcal{B}(\mathbf{K}),$ $U_{+}\in\mathcal{B}(\mathbf{K}_{+})$,
and $S_{u}\in\mathcal{B}(\mathbf{H}_{u})$ by
\[
U(f\oplus g)=\chi f\oplus\chi g,\quad f\oplus g\in\mathbf{K},\quad U_{+}=U|\mathbf{K}_{+},
\]
and
\begin{equation}
S_{u}=P_{\mathbf{H}_{u}}U|\mathbf{H}_{u}=(U_{+}^{*}|\mathbf{H}_{u})^{*}.\label{eq:definition of S_u}
\end{equation}
Then the operator $S_{u}$ is completely nonunitary, it has defect
indices equal to $1$, and its characteristic function coincides with
$u$. Moreover $U_{+}$ is the minimal isometric dilation of $S_{u}$,
and $U$ is the minimal unitary dilation of $S_{u}$. We refer to
\cite{harmonic} or \cite{nik} for an exposition of these facts. 

Observe that the operator $S_{u}$ is of class $C_{00}$, that is,
\[
\lim_{n\to\infty}\|S_{u}^{n}h\|=\lim_{n\to\infty}\|S_{u}^{*n}h\|=0,\quad h\in\mathcal{H},
\]
 if and only if $u$ is an inner function, that is, $\Delta=0$. In
this case $\mathbf{H}_{u}=H^{2}\ominus uH^{2}$. In this paper we
concern ourselves primarily with the case in which $u$ is not inner.
All of the arguments in the paper, with the exception of the proof
of Proposition \ref{prop:symbols for the zero operator}, work equally
well if $u$ is an inner function. However, these results were already
known in the inner case. We refer to \cite{sar-TTO} for a detailed
discussion.

We record for further use the formula
\[
U_{+}^{*}(f\oplus g)=\overline{\chi}(f-f(0))\oplus\overline{\chi}g,\quad f\oplus g\in\mathbf{K}_{+}.
\]

We use the linear manifolds
\[
\mathbf{K}^{\infty}=\{f\oplus g:f\in L^{\infty},g\in L^{\infty}\cap(\Delta L^{2})^{-}\},\quad\mathbf{K}_{+}^{\infty}=\mathbf{K_{+}\cap\mathbf{K}}^{\infty},
\]
and
\[
\mathbf{H}_{u}^{\infty}=\mathbf{H}_{u}\cap\mathbf{K}^{\infty}.
\]
It is clear that $\mathbf{K}^{\infty}$ is dense in $\mathbf{K}$
and $\mathbf{K}_{+}^{\infty}$ is dense in $\mathbf{K}_{+}$. To show
that $\mathbf{H}_{u}^{\infty}$ is also dense in $\mathbf{H}_{u}$, we
consider the vectors $\chi^{n}\oplus0$ and $\chi^{-n}u\oplus\chi^{-n}\Delta$,
$n\in\mathbb{Z}.$ These elements of $\mathbf{K}^{\infty}$ span a
dense linear manifold in $\mathbf{K},$ and therefore their orthogonal
projections onto $\mathbf{H}_{u}$ span a dense linear manifold in
$\mathbf{H}_{u}$. These orthogonal projections are again bounded
functions. In fact, $P_{\mathbf{H}_{u}}(\chi^{n}\oplus0)=0$ for $n<0$,
and 
\[
P_{\mathbf{H}_{u}}(\chi^{n}\oplus0)=\chi^{n}\oplus0-P_{\mathbf{G}}(\chi^{n}\oplus0),\quad n\ge0.
\]
The second projection is easily calculated as
\[
P_{\mathbf{G}}(\chi^{n}\oplus0)=\sum_{j=0}^{n}\overline{\alpha_{n-j}}(\chi^{j}u\oplus\chi^{j}\Delta),\quad\alpha_{n-j}=\langle u,\chi^{n-j}\rangle,\quad j=0,\dots,n.
\]
Similarly, $P_{\mathbf{H}_{u}}(\chi^{-n}u\oplus\chi^{-n}\Delta)=0$
for $n\le0$, and 
\[
P_{\mathbf{H}_{u}}(\chi^{-n}u\oplus\chi^{-n}\Delta)=P_{H^{2}}(\chi^{-n}u)\oplus\chi^{-n}\Delta,\quad n>0,
\]
where $P_{H^{2}}:L^{2}\to H^{2}$ denotes the orthogonal projection,
so
\[
P_{H^{2}}(\chi^{-n}u)=\chi^{-n}u-\sum_{j=0}^{n-1}\alpha_{j}\chi^{j-n},\quad\alpha_{j}=\langle u,\chi^{j}\rangle,\quad j=0,\dots,n-1.
\]

Two particularly important vectors in $\mathbf{H}_{u}^{\infty}$ are
defined by
\[
k_{0}=k_{0}^{u}=P_{\mathbf{H}_{u}}(1\oplus0)=(1-\overline{u(0)}u)\oplus(-\overline{u(0)}\Delta)
\]
and
\[
\widetilde{k}_{0}=\widetilde{k}_{0}^{u}=P_{\mathbf{H}_{u}}(\overline{\chi}u\oplus\overline{\chi}\Delta)=(\overline{\chi}(u-u(0))\oplus(\overline{\chi}\Delta).
\]
The operator $S_{u}$ maps $\mathbf{H}_{u}\ominus\mathbb{C}\widetilde{k}_{0}$
isometrically onto $\mathbf{H}_{u}\ominus\mathbb{C}k_{0}$, $S_{u}h=Uh$
for $h\in\mathbf{H}_{u}\ominus\mathbb{C}\widetilde{k}_{0}$, and
\[
S_{u}\widetilde{k}_{0}=-u(0)k_{0},\quad S_{u}^{*}k_{0}=-\overline{u(0)}k_{0}.
\]
 Using these facts and the equalities $\|k_{0}\|^{2}=\|\widetilde{k}_{0}\|^{2}=1-|u(0)|^{2}$,
it is easy to verify the identities
\begin{equation}
I_{\mathbf{H}_{u}}-S_{u}S_{u}^{*}=k_{0}\otimes k_{0},\quad I_{\mathbf{H}_{u}}-S_{u}^{*}S_{u}=\widetilde{k}_{0}\otimes\widetilde{k}_{0},\label{eq:defect of S_u}
\end{equation}
where we use the notation $v\otimes w$ for the rank one operator
$h\mapsto\langle h,w\rangle v$. 

The linear manifolds $\mathbf{K}^{\infty}$ and $\mathbf{K}_{+}^{\infty}$
are clearly invariant under $U$. The linear manifold $\mathbf{H}_{+}^{\infty}$
is also invariant under $S_{u}$, as seen from the formula
\[
S_{u}(f\oplus g)=\chi f\oplus\chi g-\langle f\oplus g,\widetilde{k}_{0}\rangle(u\oplus\Delta),\quad f\oplus g\in\mathbf{H}_{u}.
\]
 Similarly, $\mathbf{H}_{u}^{\infty}$ is invariant under $S_{u}^{*}$
because
\[
S_{u}^{*}(f\oplus g)=\overline{\chi}f\oplus\overline{\chi}g-\langle f\oplus g,k_{0}\rangle(1\oplus0),\quad f\oplus g\in\mathbf{H}_{u}.
\]

It is well known that the commutant $\{U\}'$ consists of multiplication
operators by matrix functions
\[
\left[\begin{array}{cc}
a & b\\
c & d
\end{array}\right],
\]
where $a,b,c,d\in L^{\infty}.$ We require a larger class of unbounded linear transformations that commute with $U$. Suppose
that we are given functions $a,b,c,d\in L^{2}$. We consider the matricial
function
\[
F=\left[\begin{array}{cc}
a & b\\
c & d
\end{array}\right]
\]
and the linear transformation $M_{F}:\mathbf{K}^{\infty}\to\mathbf{K}$
given by
\[
M_{F}(f\oplus g)=(af+bg)\oplus(cf+dg),\quad f\oplus g\in\mathbf{K}^{\infty}.
\]
In other words, $M_{F}$ is the operator of multiplication by $F$.
(Observe that modifying the values of $b,c,$ and $d$ on $\{\zeta\in\mathbb{T}:\Delta(\zeta)=0\}$
does not alter the operator $M_{F}$.  It is useful however to allow
for arbitrary $b,c,d\in L^{2}$.) Generally, $M_{F}$ is not continuous
but it is closable, as can be seen from the inclusion $M_{F^{*}}\subset(M_{F})^{*}$,
where 
\[
F^{*}=\left[\begin{array}{cc}
\overline{a} & \overline{c}\\
\overline{b} & \overline{d}
\end{array}\right].
\]
The equality $M_{F}Uv=UM_{F}v$ holds for every $v\in\mathbf{K}_{u}^{\infty}$.
The operator $M_{F}$ is bounded if and only if $a\in L^{\infty}$
and the functions $b,c,d$ are essentially bounded on $\{\zeta\in\mathbb{T}:\Delta(\zeta)\ne0\}$.
We define a linear transformation $A_{F}:\mathbf{H}_{u}^{\infty}\to\mathbf{H}_{u}$
by
\[
A_{F}v=P_{\mathbf{H}_{u}}M_{F}v,\quad v\in\mathbf{H}_{u}^{\infty}.
\]
 We also have $A_{F^{*}}\subset(A_{F})^{*},$ so $A_{F}$ is always
closable. In particular, $A_{F}$ is bounded if and only if $A_{F}^{*}$
is bounded. If $M_{F}$ is bounded then, of course, $A_{F}$ is bounded
as well, but not conversely \cite{Baranov-c-f-m-t,baranov-b-k}.
\begin{defn}
\label{def:TTO}A bounded linear operator $T\in\mathcal{B}(\mathbf{H}_{u})$
is called a \emph{truncated multiplication operator} if there exists
a function $F$ as above such that $Tv=A_{F}v$ for every $v\in\mathbf{H}_{u}^{\infty}$.
The collection of all truncated multiplication operators is denoted
$\mathcal{T}_{u}$.
\end{defn}
The collection $\mathcal{T}_{u}$ is a linear space, closed under
taking adjoints. In other words, $\mathcal{T}_{u}$ is an \emph{operator
system}.
\begin{rem}
\label{rem:symbol not unique}In the preceding definition, it seems
natural to view $F$ as the \emph{symbol} of the truncated Toeplitz
operator $T$. Note however that there are nonzero functions $F$
such that $A_{F}=0$, and thus a given operator in $\mathcal{T}_{u}$
may have more than one symbol. The symbols $F$ with the property
that $A_{F}=0$ are described in Proposition \ref{prop:symbols for the zero operator}.\end{rem}
\begin{example}
\label{exa:S_u is TMO}The operator $S_{u}$ itself belongs to $\mathcal{T}_{u}$.
One symbol for $S_{u}$ is the function
\[
\left[\begin{array}{cc}
\chi & 0\\
0 & \chi
\end{array}\right],
\]
as can be seen directly from (\ref{eq:definition of S_u}).
\begin{example}
\label{exa:a TTO of rank one}The function
\[
F=\left[\begin{array}{cc}
\chi\overline{u} & \chi\Delta\\
0 & 0
\end{array}\right]
\]
is the symbol of the rank one operator $T=(1-|u(0)|^2)k_0\otimes
\widetilde{k}_0
\in\mathcal{T}_{u}$. To see this, consider
an arbitrary vector $x=h\oplus g\in\mathbf{H}_{u}$, so 
\[
Tx=P_{\mathbf{H}_{u}}(\chi(\overline{u}h+\Delta g)\oplus0)=P_{\mathbf{H}_{u}}(P_{H^{2}}(\chi(\overline{u}h+\Delta g))\oplus0).
\]
It was noted earlier that $\overline{u}h+\Delta g\in L^{2}\ominus H^{2}$,
and therefore $P_{H^{2}}(\chi(\overline{u}h+\Delta g))=P_{H^{2}}(\rho\oplus0)$
for some $\rho\in\mathbb{C}$. The constant $\rho$ is equal to zero
if $x\perp\mathbb{C}\widetilde{k}_{0}$. For $x=k_{0}$, we have
\[
\chi(\overline{u}h+\Delta g)=\overline{u}(u-u(0))+\Delta^{2}=1-u(0)\overline{u},
\]
and thus $P_{H^{2}}(\chi(\overline{u}h+\Delta g))=1-|u(0)|^{2}$.
\end{example}
\end{example}
There is a special class of matrix functions $F$ with the property
that $A_{F}$ commutes with $S_{u}$ on the space $\mathbf{H}_{u}^{\infty}$.
These functions are of the form
\begin{equation}
F=\left[\begin{array}{cc}
a & 0\\
\Delta c & a-uc
\end{array}\right],\label{eq:commutant F}
\end{equation}
where $a\in H^{2}$ and $c\in L^{2}.$ It is easily seen that functions
of this form satisfy $M_{F}(\mathbf{K}_{+}^{\infty})\subset\mathbf{K}_{+}^{\infty}$
and $M_{F}(\mathbf{K}_{+}^{\infty}\cap\mathbf{G})\subset\mathbf{G}$.
Thus, if $x\in\mathbf{H}_{u}^{\infty}$, we have $P_{\mathbf{H}_{u}}(M_{F}P_{\mathbf{G}}Ux)=0$
and $P_{\mathbf{H}_{u}}(UP_{\mathbf{G}}M_{F}x)=0$ and therefore 
\begin{eqnarray*}
A_{F}S_{u}x & = & P_{\mathbf{H}_{u}}M_{F}S_{u}x=P_{\mathbf{H}_{u}}(M_{F}Ux-M_{F}P_{\mathbf{G}}Ux)=P_{\mathbf{H}_{u}}M_{F}Ux\\
 & = & P_{\mathbf{H}_{u}}UM_{F}x=P_{\mathbf{H}_{u}}(UA_{F}x)+P_{\mathbf{H}_{u}}(UP_{\mathbf{G}}M_{F}x)=S_{u}A_{F}x.
\end{eqnarray*}
In the case in which $u\not\equiv0$, the commutant lifting theorem
implies that every bounded operator $T\in\{S_{u}\}'$ is of the form
$A_{F}$, where $F$ is a function of the form \ref{eq:commutant F}
with $a\in H^{\infty}$ and $c\in L^{\infty}$  \cite[Lemma 2.1]{intertwining}.
\begin{lem}
\label{lem:commutant commutative} Suppose that the function $u$
is not identically zero. Then the commutant $\{S_{u}\}'$ is commutative.\end{lem}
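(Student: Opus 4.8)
The plan is to combine the description of $\{S_{u}\}'$ coming from the commutant lifting theorem with a commutativity statement at the level of symbols. Since $u\not\equiv0$, every operator in $\{S_{u}\}'$ has the form $A_{F}$ with $F$ of the form \eqref{eq:commutant F}, where $a\in H^{\infty}$ and $c\in L^{\infty}$. Thus, given $T_{1},T_{2}\in\{S_{u}\}'$, I would write $T_{1}=A_{F}$ and $T_{2}=A_{G}$ with
\[
F=\left[\begin{array}{cc} a & 0\\ \Delta c & a-uc\end{array}\right],\qquad G=\left[\begin{array}{cc} a' & 0\\ \Delta c' & a'-uc'\end{array}\right],
\]
and aim to prove the two facts $A_{F}A_{G}=A_{FG}$ and $FG=GF$, where $FG$ is the pointwise matrix product. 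Granting these, one obtains $A_{F}A_{G}=A_{FG}=A_{GF}=A_{G}A_{F}$, which is the desired conclusion.

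The key step, and the point I expect to be the main obstacle, is the multiplicativity $A_{F}A_{G}=A_{FG}$, because the compression $T\mapsto P_{\mathbf{H}_{u}}M_{T}|\mathbf{H}_{u}$ is not multiplicative in general. I would verify it weakly, showing that $\langle A_{F}A_{G}x,y\rangle=\langle A_{FG}x,y\rangle$ for all $x,y\in\mathbf{H}_{u}^{\infty}$; this is enough since $\mathbf{H}_{u}^{\infty}$ is dense and all the operators are bounded. As $A_{F}$ is bounded and $A_{F^{*}}\subset(A_{F})^{*}$, for $y\in\mathbf{H}_{u}^{\infty}$ one has $(A_{F})^{*}y=A_{F^{*}}y=P_{\mathbf{H}_{u}}M_{F^{*}}y$, and since $A_{G}x\in\mathbf{H}_{u}$ the projection may be dropped, so that
\[
\langle A_{F}A_{G}x,y\rangle=\langle A_{G}x,M_{F^{*}}y\rangle=\langle M_{G}x,M_{F^{*}}y\rangle-\langle P_{\mathbf{G}}M_{G}x,M_{F^{*}}y\rangle,
\]
where I used $A_{G}x=M_{G}x-P_{\mathbf{G}}M_{G}x$ (valid because $M_{G}x\in\mathbf{K}_{+}=\mathbf{H}_{u}\oplus\mathbf{G}$). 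The first term equals $\langle M_{F}M_{G}x,y\rangle=\langle M_{FG}x,y\rangle=\langle A_{FG}x,y\rangle$, using $M_{F}M_{G}=M_{FG}$ on $\mathbf{K}^{\infty}$ together with $y\in\mathbf{H}_{u}$. For the second term I would invoke the invariance $M_{F}(\mathbf{G})\subseteq\mathbf{G}$, the $L^{2}$ version of the property $M_{F}(\mathbf{K}_{+}^{\infty}\cap\mathbf{G})\subseteq\mathbf{G}$ recorded just before the lemma, to get $\langle P_{\mathbf{G}}M_{G}x,M_{F^{*}}y\rangle=\langle M_{F}P_{\mathbf{G}}M_{G}x,y\rangle=0$, since $M_{F}P_{\mathbf{G}}M_{G}x\in\mathbf{G}$ is orthogonal to $y\in\mathbf{H}_{u}$. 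The delicate point is that $A_{G}x$, equivalently $P_{\mathbf{G}}M_{G}x$, need not lie in $\mathbf{K}^{\infty}$, so $M_{F}$ cannot be applied to it naively; I would resolve this by observing that $M_{F}$ restricts to a bounded operator on $\mathbf{G}$ (under the parametrization $h\mapsto uh\oplus\Delta h$ of $\mathbf{G}$ it becomes multiplication by $a$, hence is bounded on $H^{2}$), and passing to the limit from bounded vectors in $\mathbf{G}\cap\mathbf{K}^{\infty}$, where the adjoint identity $\langle M_{F}v,y\rangle=\langle v,M_{F^{*}}y\rangle$ is immediate.

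Finally, the identity $FG=GF$ is a direct $2\times2$ computation: the product of two symbols of the form \eqref{eq:commutant F} is again a symbol of that form, with top-left entry $aa'$ and $c$-parameter $ca'+ac'-ucc'$, both expressions being symmetric in the pairs $(a,c)$ and $(a',c')$; hence $FG$ and $GF$ have identical entries. I expect this matrix computation and the reduction via commutant lifting to be routine, so that essentially all the difficulty lies in establishing $A_{F}A_{G}=A_{FG}$, and within it in the orthogonality argument that annihilates the cross term in spite of the possible unboundedness of $A_{G}x$.
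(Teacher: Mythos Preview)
Your proposal is correct and follows essentially the same route as the paper: reduce via commutant lifting to symbols of the form \eqref{eq:commutant F}, verify $FG=GF$ by the same $2\times2$ computation, and establish $A_{F}A_{G}=A_{FG}$ from the invariance $M_{F}(\mathbf{G})\subset\mathbf{G}$. The only difference is cosmetic: the paper argues directly that $TT'x=P_{\mathbf{H}_{u}}(FF'x)$ using the boundedness of $M_{F}$ and $M_{F'}$ (so your ``delicate point'' is in fact not delicate here, since $F$ has $L^{\infty}$ entries and $M_{F}$ extends boundedly to all of $\mathbf{K}$), whereas you unwind the same identity weakly via adjoints.
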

\begin{proof}
Suppose that the operators $T,T'\in\{S_{u}\}'$ are determined by
the functions
\[
F=\left[\begin{array}{cc}
a & 0\\
\Delta c & a-uc
\end{array}\right],\quad F'=\left[\begin{array}{cc}
a' & 0\\
\Delta c' & a'-uc'
\end{array}\right],
\]
respectively, for some $a,a'\in H^{\infty}$ and $c\in L^{\infty}$.
A calculation shows that
\[
FF'=F'F=\left[\begin{array}{cc}
a'' & 0\\
\Delta c'' & a''-uc''
\end{array}\right],
\]
where $a''=aa'$ and $c''=ac'+a'c-ucc'$. Suppose that $x$ is an
arbitrary vector in $\mathbf{H}_{u}$. Then
\[
TT'x=P_{\mathbf{H}_{u}}(FP_{\mathbf{H}_{u}}(F'x))=P_{\mathbf{H}_{u}}(FF'x)-P_{\mathbf{H}_{u}}(FP_{\mathbf{G}}(F'x))=P_{\mathbf{H}_{u}}(FF'x),
\]
since $F\mathbf{G}\subset\mathbf{G}\subset\mathbf{H}_{u}^{\perp}$.
It follows that $A_{F''}$ is a symbol for  $TT'$. Similarly, $A_{F''}$
is a symbol for $T'T$, and thus $TT'=T'T$.
\end{proof}
The commutant $\{S_{u}\}'$ is not commutative if $u\equiv0$; see
Example \ref{exa:u=00003D0}.

\section{\label{sec:TTO=00003Dinv}Characterization of truncated multiplication
operators by invariance}

In this section, we show that truncated multiplication operators are
characterized intrinsically by their properties as operators, without
reference to a symbol.
\begin{defn}
\label{def:A-invariant} Suppose that $A$ is a contraction on a Hilbert
space $\mathcal{H}.$ A bounded linear operator $T\in\mathcal{B}(\mathcal{H})$
is said to be \emph{$A$-invariant} if the equality
\[
\langle Tx,y\rangle=\langle TAx,Ay\rangle
\]
holds for every pair of vectors $x,y\in\ker(I_{\mathcal{H}}-A^{*}A)$.\end{defn}
\begin{lem}
\label{lem:about invariance}Suppose that $A\in\mathcal{B}(\mathcal{H})$
is a contraction and $T\in\mathcal{B}(\mathcal{H})$ is an arbitrary
operator. Denote by $\mathcal{D}_{A}=[(I_{\mathcal H}-A^{*}A)\mathcal{H}]^{-}$
and $\mathcal{D}_{A^{*}}=[(I_{\mathcal H}-AA^{*})\mathcal{H}]^{-}$ the defect
spaces of $A$, and by $P_{\mathcal{D}_{A}}$ and $P_{\mathcal{D}_{A^{*}}}$
the corresponding orthogonal projections. Then the following conditions
are equivalent\emph{:}
\begin{enumerate}
\item [\rm(1)]$T$ is $A$-invariant.
\item [\rm(2)]$T$ is $A^{*}$-invariant.
\item [\rm(3)]There exist  $X,Y\in\mathcal{B}(\mathcal{H})$
such that $T-ATA^{*}=XP_{\mathcal{D}_{A^{*}}}+P_{\mathcal{D}_{A^{*}}}Y$.
\item [\rm(4)]There  exist $X,Y\in\mathcal{B}(\mathcal{H})$
such that $T-A^{*}TA=XP_{\mathcal{D}_{A}}+P_{\mathcal{D}_{A}}Y$.
\end{enumerate}
\end{lem}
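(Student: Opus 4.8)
The plan is to translate the bilinear identities in (1) and (2) into statements that the operators $T-A^{*}TA$ and $T-ATA^{*}$ have a vanishing corner, and then to recognize both the resulting corner conditions and the passage between $A$ and $A^{*}$ as elementary. First I would record that, since $I_{\mathcal{H}}-A^{*}A$ is self-adjoint, its kernel equals $[(I_{\mathcal{H}}-A^{*}A)\mathcal{H}]^{-\,\perp}=\mathcal{D}_{A}^{\perp}$, and likewise $\ker(I_{\mathcal{H}}-AA^{*})=\mathcal{D}_{A^{*}}^{\perp}$. Writing $B=T-A^{*}TA$, the identity $\langle Tx,y\rangle=\langle TAx,Ay\rangle=\langle A^{*}TAx,y\rangle$ is the same as $\langle Bx,y\rangle=0$; thus condition (1) says exactly that $B$ vanishes as a bilinear form on $\mathcal{D}_{A}^{\perp}\times\mathcal{D}_{A}^{\perp}$, i.e. $(I_{\mathcal{H}}-P_{\mathcal{D}_{A}})B(I_{\mathcal{H}}-P_{\mathcal{D}_{A}})=0$. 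In the same way, condition (2) is equivalent to $(I_{\mathcal{H}}-P_{\mathcal{D}_{A^{*}}})(T-ATA^{*})(I_{\mathcal{H}}-P_{\mathcal{D}_{A^{*}}})=0$.

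Next I would isolate the purely algebraic equivalence: for a bounded operator $B$ and an orthogonal projection $P$, one has $(I-P)B(I-P)=0$ if and only if $B=XP+PY$ for some $X,Y\in\mathcal{B}(\mathcal{H})$. The implication from the additive form is immediate from $(I-P)P=P(I-P)=0$. For the converse, the vanishing of the corner gives the decomposition $B=PB+(I-P)BP$ (because $(I-P)B=(I-P)BP+(I-P)B(I-P)=(I-P)BP$), so one may simply take $Y=B$ and $X=(I-P)B$, both bounded. Applied with $P=P_{\mathcal{D}_{A}}$ this yields the equivalence of (1) and (4), and applied with $P=P_{\mathcal{D}_{A^{*}}}$ it yields the equivalence of (2) and (3).

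Finally, to close the loop I would prove that (1) and (2) are equivalent directly from the form statements, using the structural fact that $A$ maps $\mathcal{D}_{A}^{\perp}=\ker(I_{\mathcal{H}}-A^{*}A)$ isometrically onto $\mathcal{D}_{A^{*}}^{\perp}=\ker(I_{\mathcal{H}}-AA^{*})$, with inverse $A^{*}$: indeed, $A^{*}Ax=x$ for $x\in\mathcal{D}_{A}^{\perp}$ forces $AA^{*}(Ax)=Ax$, so $Ax\in\mathcal{D}_{A^{*}}^{\perp}$, and symmetrically for $A^{*}$. Assuming (1), for $u,v\in\mathcal{D}_{A^{*}}^{\perp}$ I would substitute $x=A^{*}u$ and $y=A^{*}v$ (both in $\mathcal{D}_{A}^{\perp}$) into the identity of (1); using $AA^{*}u=u$ and $AA^{*}v=v$ this reads $\langle TA^{*}u,A^{*}v\rangle=\langle Tu,v\rangle$, which is precisely (2). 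Interchanging the roles of $A$ and $A^{*}$ gives the converse.

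I do not expect a genuine obstacle; the entire content lies in the two observations above. The point most easily overlooked is that the bilinear identity on a subspace is literally a compression being zero, together with the one-line decomposition $B=PB+(I-P)BP$ that converts the corner condition into the additive form $XP+PY$. Once these are in place, the equivalence of (1) and (2) is forced by the fact that $A$ restricts to a unitary between the two defect-free subspaces, and no control beyond the given boundedness of $T$ is needed.
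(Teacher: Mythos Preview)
Your proof is correct and follows essentially the same route as the paper's: both reduce $A$-invariance to the vanishing of the corner $(I-P_{\mathcal{D}_A})(T-A^{*}TA)(I-P_{\mathcal{D}_A})$, convert that into the additive form $XP_{\mathcal{D}_A}+P_{\mathcal{D}_A}Y$ by the same elementary decomposition, and pass between (1) and (2) via the fact that $A$ restricts to a unitary from $\mathcal{D}_A^{\perp}$ onto $\mathcal{D}_{A^{*}}^{\perp}$. The only cosmetic differences are your explicit justification that $\ker(I-A^{*}A)=\mathcal{D}_A^{\perp}$ and a slightly different (but equivalent) choice of $X$ and $Y$.
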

\begin{proof}
The operator $A$ maps the space $\ker(I-A^{*}A)=\mathcal{D}_{A}^{\perp}$
isometrically onto $\ker(I-AA^{*})=\mathcal{D}_{A^{*}}^{\perp}$.
Thus, given arbitrary vectors $u,v\in\ker(I-AA^{*})$, there exist
unique $x,y\in\ker(I-A^{*}A)$ such that $Ax=u$, $Ay=v$, $A^{*}u=x$,
and $A^{*}v=y$. If $T$ is $A$-invariant, we see that
\[
\langle Tu,v\rangle=\langle TAx,Ay\rangle=\langle Tx,y\rangle=\langle TA^{*}u,A^{*}v\rangle,
\]
and this shows that $T$ is $A^{*}$-invariant. This establishes that
(1) implies (2) and the equivalence of (1) and (2) follows by symmetry.

Suppose now that $T$ is $A$-invariant and observe that
\[
\langle(T-A^{*}TA)x,y\rangle=\langle Tx,y\rangle-\langle TAx,Ay\rangle=0
\]
for every $ x,y\in\ker(I_{\mathcal H }-A^{*}A)=\mathcal{D}_{A}^{\perp}$. It follows that $(I_{\mathcal H }-P_{\mathcal{D}_{A}}) (T-A^*TA)(I_{\mathcal H }-P_{\mathcal{D}_{A}})=0$,
and thus (4) is satisfied with 
\begin{eqnarray*}
X & = & T-A^*TA,\\
Y & = & (T-A^*TA)(I_{\mathcal H }-P_{\mathcal{D}_{A}}).
\end{eqnarray*}
Conversely, if (4) is satisfied, the identity $(I_{\mathcal H }-P_{\mathcal{D}_{A}}) (T-A^*TA)(I_{\mathcal H }-P_{\mathcal{D}_{A}})=0$
follows immediately, thus showing that $T$ is $A$-invariant. We
conclude that (1) is equivalent to (4). The equivalence of (2) and
(3) is proved the same way, replacing $A$ by $A^{*}$.\end{proof}

Suppose now that $u\in H^\infty$ is purely contractive. In the special case of the operator
$A=S_{u}$, (\ref{eq:defect of S_u}) shows that $\ker(I_{\mathbf{H}_{u}}-S_{u}^{*}S_{u})=\mathbf{H}_{u}\ominus\mathbb{C}\widetilde{k}_{0}$.
Moreover, given $x\in\mathbf{H}_{u}$, we have $Ux=S_{u}x\in\mathbf{H}_{u}$
precisely when $x\in\mathbf{H}_{u}\ominus\mathbb{C}\widetilde{k}_{0}$.
Thus an operator $T\in\mathcal{B}(\mathbf{H}_{u})$ is $S_{u}$-invariant
if and only if
\[
\langle Tx,y\rangle=\langle TUx,Uy\rangle,\quad x,y\in\mathbf{H}_{u}\ominus\mathbb{C}\widetilde{k}_{0}.
\]
The polarization identity shows that an operator $T\in\mathcal{B}(\mathbf{H}_{u})$
is $S_{u}$-invariant if and only
\begin{equation}
\langle Tx,x\rangle=\langle TUx,Ux\rangle,\quad x\in\mathbf{H}_{u}\ominus\mathbb{C}\widetilde{k}_{0}.\label{eq:invariance-one}
\end{equation}
The invariance condition can be written equivalently as
\begin{equation}
\langle Tx,x\rangle=\langle TU^{*}x,U^{*}x\rangle,\quad x\in\mathbf{H}_{u}\ominus\mathbb{C}k_{0}.\label{eq:invariance-2}
\end{equation}

We now state the main result in this section.
\begin{thm}
\label{teorema principala}The following four conditions on an operator
$T\in\mathcal{B}(\mathbf{H}_{u})$ are equivalent\emph{:}
\begin{enumerate}
\item [\rm(1)]$T\in\mathcal{T}_{u}$.
\item [\rm(2)]$T$ is $S_{u}$-invariant.
\item [\rm(3)]There exist vectors $v,w\in\mathbf{H}_{u}$ such that $T-S_{u}TS_{u}^{*}=v\otimes k_{0}+k_{0}\otimes w$.
\item [\rm(4)]There exist vectors $\widetilde{v,}\widetilde{w}\in\mathbf{H}_{u}$
such that $T-S_{u}^{*}TS_{u}=\widetilde{v}\otimes\widetilde{k}_{0}+\widetilde{k}_{0}\otimes\widetilde{w}$.
\end{enumerate}
\end{thm}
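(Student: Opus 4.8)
The plan is to establish the cycle of implications $(3)\Leftrightarrow(4)$, $(2)\Leftrightarrow(3)$ (both of which follow almost immediately from the machinery already assembled), and then the substantive equivalence $(1)\Leftrightarrow(2)$, where condition $(1)$ is the existence of a symbol. The equivalences $(2)\Leftrightarrow(3)$ and $(2)\Leftrightarrow(4)$ should be read as the specialization of Lemma~\ref{lem:about invariance} to $A=S_u$: by~\eqref{eq:defect of S_u} the projection $P_{\mathcal D_{S_u}}$ is the rank-one projection onto $\mathbb C\widetilde k_0$ (up to the scalar $\|\widetilde k_0\|^2=1-|u(0)|^2$), and similarly $P_{\mathcal D_{S_u^*}}$ projects onto $\mathbb C k_0$. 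Thus the abstract conditions $T-S_uTS_u^*=XP_{\mathcal D_{S_u^*}}+P_{\mathcal D_{S_u^*}}Y$ and $T-S_u^*TS_u=XP_{\mathcal D_{S_u}}+P_{\mathcal D_{S_u}}Y$ become exactly the rank-two conditions $(3)$ and $(4)$, once one absorbs the scalar into the vectors $v,w,\widetilde v,\widetilde w$ and writes $XP_{\mathcal D_{S_u^*}}=(X k_0)\otimes k_0$, etc. So $(2)$, $(3)$, $(4)$ are mutually equivalent essentially by quoting the lemma.

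\textbf{The heart of the matter is $(1)\Rightarrow(2)$ and $(2)\Rightarrow(1)$.} For $(1)\Rightarrow(2)$, suppose $T=A_F$ on $\mathbf H_u^\infty$ for some matricial symbol $F$. Since $M_F$ commutes with $U$ on $\mathbf K_u^\infty$, the defining identity $\langle Tx,x\rangle=\langle P_{\mathbf H_u}M_Fx,x\rangle=\langle M_Fx,x\rangle$ for $x\in\mathbf H_u^\infty$ should be compared with $\langle TUx,Ux\rangle$ for $x\in\mathbf H_u\ominus\mathbb C\widetilde k_0$. The key geometric fact, already recorded in the excerpt, is that $S_ux=Ux$ precisely on $\mathbf H_u\ominus\mathbb C\widetilde k_0$, so on this subspace $Ux$ stays inside $\mathbf H_u$ and the projection $P_{\mathbf H_u}$ in front of $M_F$ can be inserted or removed freely. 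Using $M_FU=UM_F$ one then computes $\langle TUx,Ux\rangle=\langle M_FUx,Ux\rangle=\langle UM_Fx,Ux\rangle=\langle M_Fx,x\rangle=\langle Tx,x\rangle$, which is exactly~\eqref{eq:invariance-one}. A density argument using that $\mathbf H_u^\infty$ is dense in $\mathbf H_u$ (established in the preliminaries) extends the identity from $\mathbf H_u^\infty\ominus\mathbb C\widetilde k_0$ to all of $\mathbf H_u\ominus\mathbb C\widetilde k_0$.

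\textbf{The main obstacle is $(2)\Rightarrow(1)$: producing a symbol from the invariance hypothesis.} Here I would work through condition $(3)$, which I already have in hand: write $T-S_uTS_u^*=v\otimes k_0+k_0\otimes w$ for suitable $v,w\in\mathbf H_u$. The idea is to recognize this as a Stein-type equation whose solution can be reconstructed by iterating. Formally, $T=\sum_{n\ge0}S_u^n(v\otimes k_0+k_0\otimes w)S_u^{*n}$, and since $S_u$ is of class $C_{00}$ (because we may reduce to, or directly use, that $S_u^{*n}\to0$ strongly on $\mathbf H_u$), this series converges in the appropriate weak sense and recovers $T$. The task is then to identify this operator as $A_F$ for an explicit $F$. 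Concretely, I expect $v$ and $w$ to correspond, via the functional model, to $L^2$ data out of which one builds the entries $a,b,c,d$; the rank-one "defect" terms $v\otimes k_0$ and $k_0\otimes w$ are precisely the discrepancy between multiplication by $F$ and its compression, as illustrated by Example~\ref{exa:a TTO of rank one}, where a single matricial symbol produced a rank-one operator of exactly the form $k_0\otimes\widetilde k_0$. The delicate point will be checking that the candidate symbol lies in $L^2$ (not merely formally) and that $A_F$ agrees with $T$ on the dense manifold $\mathbf H_u^\infty$ rather than just matching the defect equation; this matching is where one must use that a solution of the Stein equation with the given right-hand side is unique, so that exhibiting \emph{any} $A_F$ satisfying~$(3)$ forces $A_F=T$.
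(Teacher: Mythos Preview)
Your treatment of $(2)\Leftrightarrow(3)\Leftrightarrow(4)$ and of $(1)\Rightarrow(2)$ is correct and matches the paper. The serious gap is in $(2)\Rightarrow(1)$. You write that ``$S_u$ is of class $C_{00}$ (because we may reduce to, or directly use, that $S_u^{*n}\to 0$ strongly on $\mathbf H_u$)'' and then sum the Stein series $\sum_{n\ge0}S_u^n(v\otimes k_0+k_0\otimes w)S_u^{*n}$ to recover $T$. This is exactly the point at which the general case diverges from the inner case: the paper states explicitly in Section~\ref{sec:Preliminaries} that $S_u$ is $C_{00}$ \emph{if and only if} $u$ is inner, and the whole purpose of the paper is to treat $u$ not inner. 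When $\Delta\ne0$, $S_u^{*n}$ does \emph{not} tend to $0$ strongly, the Stein equation $X-S_uXS_u^*=R$ has nonunique solutions (so your uniqueness remark at the end is also false), and the partial sums of your series do not converge to $T$.

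What the paper actually does for $(3)\Rightarrow(1)$ is more delicate. Writing $v=a_1\oplus c$, $w=a_2\oplus b$, it first shows that the partial sums applied to a bounded vector $x\in\mathbf H_u^\infty$ converge weakly to $A_Fx$ with $F=\left[\begin{smallmatrix}a_1+\overline{a_2}&\overline{b}\\ c&0\end{smallmatrix}\right]$, so that $A_F$ is bounded and $S_u^nTS_u^{*n}$ converges weakly to a residual operator $T'=T-A_F$ satisfying $T'=S_uT'S_u^*$. This $T'$ is typically nonzero. The remaining work---which your outline entirely omits---is to show $T'\in\mathcal T_u$: one lifts $T'$ through the tower $\mathbf H_{\chi^n u}$ via $T_n=U_+^nT'U_+^{*n}$, checks consistency, passes to a bounded operator $X$ on $\mathbf K_+$ commuting with $U_+$, and identifies $X$ as $0\oplus M_d$ for some $d\in L^\infty$, yielding the missing $(2,2)$ entry of the symbol. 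Without this second half, your argument only recovers Sarason's inner-function result.
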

\begin{proof}
The equations (\ref{eq:defect of S_u}) show that $P_{\mathcal{D}_{S_{u}^{*}}}$
and $P_{\mathcal{D}_{S_{u}}}$ are constant multiples of $k_{0}\otimes k_{0}$
and $\widetilde{k}_{0}\otimes\widetilde{k}_{0}$, respectively. Since
$X(k_{0}\otimes k_{0})=(Xk_{0})\otimes k_{0}$ and $(k_{0}\otimes k_{0})Y=k_{0}\otimes(Y^{*}k_{0})$
for every $X,Y\in\mathcal{B}(\mathbf{H}_{u})$, the equivalence of
(2), (3), and (4) follows immediately from Lemma \ref{lem:about invariance}. 

Suppose now that (1) holds, and thus $Tv=A_{F}v,$ $v\in\mathbf{H}_{u}^{\infty}$,
for some matrix $F$. Since $\widetilde{k}_{0}\in\mathbf{H}_{u}^{\infty}$,
it follows that $\mathbf{H}_{u}^{\infty}\cap$($\mathbf{H}_{u}\ominus\mathbb{C}\widetilde{k}_{0}$)
is dense in $\mathbf{H}_{u}\ominus\mathbb{C}\widetilde{k}_{0}$. Thus,
it suffices to verify (\ref{eq:invariance-one}) for $v\in\mathbf{H}_{u}^{\infty}\cap(\mathbf{H}_{u}\ominus\mathbb{C}\widetilde{k}_{0})$.
For such a vector $v$ we have
\begin{eqnarray*}
\langle TUv,Uv\rangle & = & \langle P_{\mathbf{H}_{u}}M_{F}Uv,Uv\rangle=\langle M_{F}Uv,Uv\rangle=\langle UM_{F}v,Uv\rangle\\
 & = & \langle M_{F}v,v\rangle=\langle P_{\mathbf{H}_{u}}M_{F}v,v\rangle=\langle Tv,v\rangle,
\end{eqnarray*}
where we used the facts that $U$ is unitary and $M_{F}$ commutes
with $U$. We conclude that (1) implies (2).

We come now to the heart of the proof by showing that (3) implies
(1). Suppose that (3) holds for some vectors $v=a_{1}\oplus c$ and
$w=a_{2}\oplus b$ in $\mathbf{H}_{u}$. We define a matrix function
$F$ by
\[
F=\left[\begin{array}{cc}
a_{1}+\overline{a_{2}} & \overline{b}\\
c & 0
\end{array}\right].
\]
We show first that:
\begin{enumerate}
\item [\rm(i)]the operator $A_{F}$ is bounded,
\item [\rm(ii)]the sequence $\{S_{u}^{n}TS_{u}^{*n}\}_{n\in\mathbb{N}}$
converges in the weak operator topology to an operator $T'$ such
that $T'=S_{u}T'S_{u}^{*}$, and
\item [\rm(iii)] $T=A_{F}+T'$ on $\mathbf{H}_{u}^{\infty}$.
\end{enumerate}
To do this, fix a vector $x=g\oplus h\in\mathbf{H}_{u}^{\infty}$
and iterate the relation $T-S_{u}TS_{u}^{*}=v\otimes k_{0}+k_{0}\otimes w$
to obtain
\begin{equation}
Tx=S_{u}^{n}TS_{u}^{*}x+\sum_{j=0}^{n-1}[S_{u}^{j}v\otimes S_{u}^{j}k_{0}+S_{u}^{j}k_{0}\otimes S_{u}^{j}w]x,\quad n\in\mathbb{N}.\label{eq:alta ecuatie}
\end{equation}
We show that the sum above converges weakly to $A_{F}x$. We calculate
first
\begin{eqnarray*}
\sum_{j=0}^{n-1}[S_{u}^{j}k_{0}\otimes S_{u}^{j}w]x & = & \sum_{j=0}^{n-1}\langle x,S_{u}^{j}w\rangle S_{u}^{j}k_{0}=P_{\mathbf{H}_{u}}\sum_{j=0}^{n-1}\langle x,\chi^{j}w\rangle(\chi^{j}\oplus0),
\end{eqnarray*}
where
\[
\langle x,\chi^{j}w\rangle=\langle\overline{a_{2}}g+\overline{b}h,\chi^{n}\rangle.
\]
Therefore, the sum $\sum_{j=0}^{n-1}\langle x,\chi^{j}w\rangle\chi^{j}$
converges in $L^{2}$ to $P_{H^{2}}(\overline{a_{2}}g+\overline{b}h)$
and thus $\sum_{j=0}^{n-1}[S_{u}^{j}k_{0}\otimes S_{u}^{j}w]x$ converges
in norm to $$P_{\mathbf{H}_{u}}(P_{H^{2}}(\overline{a_{2}}g+\overline{b}h)\oplus0)=P_{\mathbf{H}_{u}}((\overline{a_{2}}g+\overline{b}h)\oplus0).$$
Similarly,
\begin{eqnarray*}
\sum_{j=0}^{n-1}[S_{u}^{j}v\otimes S_{u}^{j}k_{0}]x & = & \sum_{j=0}^{n-1}\langle x,S_{u}^{j}k_{0}\rangle S_{u}^{j}v=\sum_{j=0}^{n-1}\langle x,\chi^{j}\oplus0\rangle S_{u}^{j}v\\
 & = & P_{\mathbf{H}_{u}}\left[\sum_{j=0}^{n-1}\langle g,\chi^{j}\rangle\chi^{j}\right]v.
\end{eqnarray*}
Moreover, since 
\[
\sum_{j=0}^{n-1}[S_{u}^{j}v\otimes S_{u}^{j}k_{0}]x=Tx-S_{u}^{n}TS_{u}^{*n}x-\sum_{j=0}^{n-1}[S_{u}^{j}k_{0}\otimes S_{u}^{j}w]x,
\]
it follows that the vectors on the left hand side of this equation
are bounded in $\mathbf{H}_{u}$. To show that they have a weak limit
in $\mathbf{H}_{u}$, it suffices to consider their scalar product
with another element $x'=g'\oplus h'\in\mathbf{H}_{u}^{\infty}$.
We have
\[
\left\langle \sum_{j=0}^{n-1}[S_{u}^{j}k_{0}\otimes S_{u}^{j}w]x,x'\right\rangle =\left\langle \left[\sum_{j=0}^{n-1}\langle g,\chi^{j}\rangle\chi^{j}\right]v,x'\right\rangle ,
\]
and the functions $\sum_{j=0}^{n-1}\langle g,\chi^{j}\rangle\chi^{j}$
converge to $g$ in $H^{2}$ as $n\to\infty$. Since $v\in\mathbf{H}_{u}$
and $x'$ is bounded, the scalar products above tend to $\langle gv,x'\rangle=\langle P_{\mathbf{H}_{u}}(a_{1}g\oplus cg),x'\rangle$
as $n\to\infty$. We conclude that the sum
\[
\sum_{j=0}^{n-1}[S_{u}^{j}v\otimes S_{u}^{j}k_{0}+S_{u}^{j}k_{0}\otimes S_{u}^{j}w]x
\]
converges weakly to $P_{\mathbf{H}_{u}}(((a_{1}+\overline{a_{2}})g+\overline{b}h)\oplus cg)=A_{F}x$.
The identity (\ref{eq:alta ecuatie}) shows that $\|A_{F}x\|\le2\|T\|$,
thus proving (i). Rewriting (\ref{eq:alta ecuatie}) as 
\[
S_{u}^{n}TS_{u}^{*n}=T-\sum_{j=0}^{n-1}[S_{u}^{j}v\otimes S_{u}^{j}k_{0}+S_{u}^{j}k_{0}\otimes S_{u}^{j}w],
\]
we see that $S_{u}^{n}TS_{u}^{*n}x$ converges weakly to $Tx-A_{F}x$
for $x\in\mathbf{H}_{u}^{\infty}$, so the weak convergence of $\{S_{u}^{n}TS_{u}^{*n}\}_{n\in\mathbb{N}}$
follows from the fact that the sequence $\{\|S_{u}^{n}TS_{u}^{*n}\|\}_{n\in\mathbb{N}}$
is bounded. Also, $S_{u}T'S_{u}^{*}$ is the weak limit of the sequence
$\{S_{u}^{n+1}TS_{u}^{*n+1}\}_{n\in\mathbb{N}}$, so it is equal to
$T'$. This proves (ii) and (iii).

To conclude the proof of (1), it suffices to show that $T'\in\mathcal{T}_{u}$.
To do this, we observe that for every $n\in\mathbb{N}$, we have $U_{+}^{*n}\mathbf{H}_{\chi^{n}u}=\mathbf{H}_{u}$
and $U_{+}^{n}\mathbf{H}_{u}\subset\mathbf{H}_{\chi^{n}u}$. We define
an operator $T_{n}\in\mathcal{B}(\mathbf{H}_{\chi^{n}u})$ by
\[
T_{n}x=U_{+}^{n}T'U_{+}^{*n}x,\quad x\in\mathbf{H}_{\chi^{n}u},n\in\mathbb{N}.
\]
Given $n\in\mathbb{N}$ and $x\in\mathbf{H}_{\chi^{n}u}\subset\mathbf{H}_{\chi^{n+1}u}$,
we have
\begin{eqnarray*}
T_{n+1}x & = & U_{+}^{n}U_{+}T'S_{u}^{*}U_{+}^{*n}x\\
 & = & T_{n}x+U_{+}^{n}(U_{+}-S_{u})T'S_{u}^{*}U_{+}^{*n}x.
\end{eqnarray*}
The vector $U_{+}^{n}(U_{+}-S_{u})T'S_{u}^{*}U_{+}^{*n}x$ belongs
to $\mathbf{H}_{\chi^{n+1}u}^{\perp}$, and thus
\[
T_{n}x=P_{\mathbf{H}_{\chi^{n}u}}T_{n+1}x,\quad x\in\mathbf{H}_{\chi^{n}u}.
\]
In particular, $T'=P_{\mathbf{H}_{u}}T_{n}|\mathbf{H}_{u}$ for every
$n\in\mathbb{N}.$ Since $\|T_{n}\|\le\|T\|$, $n\in\mathbb{N}$,
it follows that there exists an operator $X\in\mathcal{B}(\mathbf{K}_{+})$
with the property that $T_{n}=P_{\mathbf{H}_{\chi^{n}u}}X|\mathbf{H}_{\chi^{n}u}$
for every $n\in\mathbb{N}$. In fact, $\bigcup_{m\in\mathbb{N}}\mathbf{H}_{\chi^{m}u}$
is dense in $\mathbf{K}_{+}$, and
\[
Xx=\lim_{n\to\infty}T_{n}x
\]
if $x\in\mathbf{H}_{\chi^{m}u}$ for some $m\in\mathbb{N}$. The operator
$X$ satisfies the identity $X=U_{+}XU_{+}^{*}.$ This implies that
\[
X(g\oplus0)=\lim_{n\to\infty}U_{+}^{n}XU_{+}^{*n}(g\oplus0)=0,\quad g\in H^{2}.
\]
Analogously, the equality $X^{*}=U_{+}X^{*}U_{+}^{*}$ yields $X^{*}(g\oplus0)=0$
for $g\in H^{2}$. Thus, $X$ is of the form $X=0_{H^{2}}\oplus Y$,
where $Y\in\mathcal{B}((\Delta L^{2})^{-})$. Since $X$ commutes
with $U_{+}$, it follows that $Y$ commutes with multiplication by
$\chi$. Thus $Y$ must be the operator of multiplication by some
bounded measurable function $d$, and therefore
\[
X(g\oplus h)=0\oplus dh,\quad g\oplus h\in\mathbf{K}_{+}.
\]
The equality $T'=P_{\mathbf{H}_{u}}X|\mathbf{H}_{u}$ shows that $T'$
is a truncated multiplication operator with symbol
\[
\left[\begin{array}{cc}
0 & 0\\
0 & d
\end{array}\right].
\]
Putting these facts together, we have shown that
\[
T=A_{F}+T'=A_{F'},
\]
where 
\[
F'=\left[\begin{array}{cc}
a_{1}+\overline{a_{2}} & \overline{b}\\
c & 0
\end{array}\right]+\left[\begin{array}{cc}
0 & 0\\
0 & d
\end{array}\right].
\]
We have  established that (3) implies (1), thus concluding
the proof.\end{proof}
\begin{cor}
\label{cor:WOT closed system}For every purely contractive function
$u\in H^{\infty}$, the operator system $\mathcal{T}_{u}$ is closed
in the weak operator topology.\end{cor}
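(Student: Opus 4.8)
The plan is to exploit the intrinsic characterization of $\mathcal{T}_u$ furnished by Theorem \ref{teorema principala}, specifically the equivalence of conditions (1) and (2). The point is that the defining condition (1) (existence of a symbol $F$) is not manifestly closed in any topology, since one has no a priori control over the symbols attached to the members of a convergent net. By contrast, $S_u$-invariance (condition (2)) is expressed by a family of equations that are linear and weakly continuous in $T$, and it is this reformulation that makes the weak closure transparent. So the entire strategy is to reduce, via the theorem, to the observation that invariance is a weakly closed condition.

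Concretely, I would argue as follows. By Theorem \ref{teorema principala}, $\mathcal{T}_u$ coincides with the set of $S_u$-invariant operators, and, as recorded just before the statement of that theorem, an operator $T\in\mathcal{B}(\mathbf{H}_u)$ is $S_u$-invariant precisely when
\[
\langle Tx,y\rangle=\langle TUx,Uy\rangle,\quad x,y\in\mathbf{H}_u\ominus\mathbb{C}\widetilde{k}_0 .
\]
For each fixed pair $x,y\in\mathbf{H}_u\ominus\mathbb{C}\widetilde{k}_0$, the vectors $x,y,Ux,Uy$ are fixed, so the functional
\[
\Phi_{x,y}(T)=\langle Tx,y\rangle-\langle TUx,Uy\rangle
\]
is, by the very definition of the weak operator topology, continuous and linear on $\mathcal{B}(\mathbf{H}_u)$. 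Consequently
\[
\mathcal{T}_u=\bigcap_{x,y\in\mathbf{H}_u\ominus\mathbb{C}\widetilde{k}_0}\ker\Phi_{x,y}
\]
is an intersection of kernels of weakly continuous linear functionals, hence weakly closed, which is exactly the assertion.

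There is essentially no obstacle once the correct characterization is selected; the whole content of the corollary is that Theorem \ref{teorema principala} trades the non-closed defining property (existence of a symbol) for the weakly closed property of $S_u$-invariance. If one preferred not to pass through invariance, one could instead invoke condition (3): the map $T\mapsto T-S_uTS_u^*$ is weakly continuous, since $\langle S_uTS_u^*x,y\rangle=\langle TS_u^*x,S_u^*y\rangle$ depends weakly continuously on $T$, and the set $\{v\otimes k_0+k_0\otimes w:v,w\in\mathbf{H}_u\}$ equals $\{R:(I-P)R(I-P)=0\}$, where $P$ is the orthogonal projection onto $\mathbb{C}k_0$; the latter set is cut out by the weakly continuous conditions $\langle Rx',y'\rangle=0$ for $x',y'\in\ker P$ and is therefore weakly closed as well. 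Either route works, but passing through the invariance condition (2) is the most economical.
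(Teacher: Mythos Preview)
Your argument is correct and is essentially the same as the paper's: both use Theorem \ref{teorema principala} to rewrite membership in $\mathcal{T}_u$ as $S_u$-invariance, then observe that the invariance equations are given by weakly continuous linear functionals. The only cosmetic difference is that the paper uses the single-vector form $\langle TS_u x,S_u x\rangle=\langle Tx,x\rangle$ (equivalent to yours by polarization), and your additional remark about condition (3) is a valid alternative not mentioned in the paper.
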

\begin{proof}
By Theorem \ref{teorema principala}, membership of an operator $T$
in $\mathcal{T}_{u}$ is characterised by the system of equations
\[
\langle TS_{u}x,S_{u}x\rangle=\langle Tx,x\rangle,\quad x\in\mathbf{H}_{u}\ominus\mathbb{C}\widetilde{k}_{0}.
\]
Each of these equations is given by a continuous linear functional
in the weak operator topology.\end{proof}
\begin{example}
\label{exa:the rank one perturbations}Given an arbitrary scalar $\mu\in\mathbb{C}$,
we define a bounded linear operator $X_{\mu}\in\mathcal{B}(\mathbf{H}_{u})$
by 
\[
X_{\mu}x=\begin{cases}
Ux=S_{u}x, & x\in\mathbf{H}_{u}\ominus\mathbb{C}\widetilde{k}_{0},\\
\mu k_{0}, & x=\widetilde{k}_{0}.
\end{cases}
\]
It is easily verified using Theorem \ref{teorema principala}(2) that
$X_{\mu}$ is a truncated multiplication operator. We show in Corollary
\ref{cor:commutants a la Sedlock} that the commutant of $X_{\mu}$
consists entirely of truncated multiplication operators.These rank
one perturbations of $S_{u}$ have been considered earlier in \cite{clark}
(when $u$ is inner) and \cite{Ball-Lubin} (see also \cite{livsic,brang}).
The following result follows from \cite{Ball-Lubin}.\end{example}
\begin{prop}
\label{prop:about the various rank one perturbations}Fix $\mu\in\mathbb{C}$,
a purely contractive function $u\in H^{\infty}$, and let $X_{\mu}$
be defined as in Example\emph{ \ref{exa:the rank one perturbations}}.
Then\emph{:}
\begin{enumerate}
\item [\rm(1)]For $|\mu|<1$, the operator $X_{\mu}$ is a completely nounitary
contraction with defect indices equal to $1$.
\item [\rm(2)]For $|\mu|>1$, the operator $X_{\mu}$ is invertible and
$X_{\mu}^{-1}$ is a completely nounitary contraction with defect
indices equal to $1$.
\item [\rm(3)]For $|\mu|=1$, the operator $X_{\mu}$ is unitary with spectral
multiplicity equal to $1$.
\end{enumerate}
\end{prop}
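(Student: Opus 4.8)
The plan is to exploit the completely explicit action of $X_\mu$: it coincides with $S_u$ (hence with $U$) on the hyperplane $\mathbf{H}_u\ominus\mathbb{C}\widetilde{k}_0$, where it is an isometry $V$ \emph{onto} $\mathbf{H}_u\ominus\mathbb{C}k_0$, while $X_\mu\widetilde{k}_0=\mu k_0$. Set $\delta^2=1-|u(0)|^2=\|k_0\|^2=\|\widetilde{k}_0\|^2$. For $x\in\mathbf{H}_u$ write $x'=x-\delta^{-2}\langle x,\widetilde{k}_0\rangle\widetilde{k}_0\in\mathbf{H}_u\ominus\mathbb{C}\widetilde{k}_0$, so that $X_\mu x=\mu\delta^{-2}\langle x,\widetilde{k}_0\rangle k_0+Vx'$. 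Since $Vx'\perp k_0$, a one-line norm computation gives
\[
\|X_\mu x\|^2=\|x\|^2-(1-|\mu|^2)\delta^{-2}\,|\langle x,\widetilde{k}_0\rangle|^2 ,
\]
and, passing to the adjoint (which sends $k_0\mapsto\bar\mu\widetilde{k}_0$ and acts as the inverse isometry $V^*$ on $\mathbf{H}_u\ominus\mathbb{C}k_0$), one obtains the pair of identities
\[
I_{\mathbf{H}_u}-X_\mu^*X_\mu=\frac{1-|\mu|^2}{1-|u(0)|^2}\,\widetilde{k}_0\otimes\widetilde{k}_0,\qquad I_{\mathbf{H}_u}-X_\mu X_\mu^*=\frac{1-|\mu|^2}{1-|u(0)|^2}\,k_0\otimes k_0 .
\]
These two formulas are the engine of the whole proof.

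With them in hand, the quantitative assertions are immediate. If $|\mu|<1$ both defect operators are positive of rank one, so $X_\mu$ is a contraction with defect indices equal to $1$; if $|\mu|=1$ both vanish, so $X_\mu$ is unitary. If $|\mu|>1$, the block description shows $X_\mu$ is a bijection of $\mathbf{H}_u$ (it carries $\mathbb{C}\widetilde{k}_0$ onto $\mathbb{C}k_0$ by multiplication by the nonzero scalar $\mu$ and carries $\mathbf{H}_u\ominus\mathbb{C}\widetilde{k}_0$ isometrically onto $\mathbf{H}_u\ominus\mathbb{C}k_0$), hence it is invertible; applying the same computation to $X_\mu^{-1}$, whose corresponding scalar is $\mu^{-1}$, yields $I-(X_\mu^{-1})^*X_\mu^{-1}=(1-|\mu|^{-2})\delta^{-2}\,k_0\otimes k_0\ge0$ together with its symmetric companion, so $X_\mu^{-1}$ is a contraction with defect indices $1$.

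The substantive point is complete nonunitarity, and I would deduce it uniformly for $|\mu|\ne1$ from that of $S_u$. Suppose $\mathcal{M}$ reduces $X_\mu$ and $X_\mu|\mathcal{M}$ is unitary; then $X_\mu^*|\mathcal{M}$ is unitary too, so $\langle(I-X_\mu^*X_\mu)x,x\rangle=\langle(I-X_\mu X_\mu^*)x,x\rangle=0$ for $x\in\mathcal{M}$. Because $1-|\mu|^2\ne0$, the two defect identities force $\mathcal{M}\perp\widetilde{k}_0$ and $\mathcal{M}\perp k_0$. On $\mathbf{H}_u\ominus\mathbb{C}\widetilde{k}_0$ we have $X_\mu=S_u$ and on $\mathbf{H}_u\ominus\mathbb{C}k_0$ we have $X_\mu^*=S_u^*$, so $X_\mu$ and $S_u$ agree, together with their adjoints, on $\mathcal{M}$; consequently $\mathcal{M}$ reduces $S_u$ and $S_u|\mathcal{M}$ is unitary. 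Since $S_u$ is completely nonunitary, $\mathcal{M}=\{0\}$. This settles (1), and it settles (2) as well, because a subspace reduces $X_\mu^{-1}$ to a unitary if and only if it reduces $X_\mu$ to a unitary.

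Finally, for $|\mu|=1$ I would show the unitary $X_\mu$ has simple spectrum by exhibiting the cyclic vector $k_0$. Let $\mathcal{N}=\bigvee_{n\in\mathbb{Z}}X_\mu^n k_0$; it reduces $X_\mu$ and contains $\widetilde{k}_0=\mu X_\mu^*k_0$ (using $X_\mu^*k_0=\bar\mu\widetilde{k}_0$ and $|\mu|=1$). For $x\in\mathcal{N}^{\perp}$ we then have $x\perp k_0$ and $x\perp\widetilde{k}_0$, whence $X_\mu x=Ux$ and $X_\mu^*x=U^*x$ (recall $Uy=S_uy\in\mathbf{H}_u$ exactly when $y\perp\widetilde{k}_0$, and dually $U^*y=S_u^*y\in\mathbf{H}_u$ exactly when $y\perp k_0$). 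As $\mathcal{N}^{\perp}$ is $X_\mu$-reducing, it follows that $U\mathcal{N}^{\perp}\subseteq\mathcal{N}^{\perp}$ and $U^*\mathcal{N}^{\perp}\subseteq\mathcal{N}^{\perp}$, so $\mathcal{N}^{\perp}$ is a reducing subspace for the minimal unitary dilation $U$ that is contained in $\mathbf{H}_u$. On such a subspace $S_u^n$ agrees with $U^n$ for every $n\in\mathbb{Z}$, so $\mathcal{N}^{\perp}$ reduces $S_u$ to a unitary, and complete nonunitarity of $S_u$ forces $\mathcal{N}^{\perp}=\{0\}$. Thus $k_0$ is cyclic and the spectral multiplicity is $1$. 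The main obstacle here is conceptual rather than computational: recognizing that all three regimes collapse, via the two defect identities, onto the single fact that $S_u$ is completely nonunitary, and that its minimal unitary dilation admits no nonzero reducing subspace inside $\mathbf{H}_u$.
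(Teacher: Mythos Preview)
Your argument is correct. Note, however, that the paper does not actually supply a proof of this proposition: the sentence immediately preceding it reads ``The following result follows from \cite{Ball-Lubin},'' and the text then moves directly to the next corollary. So there is no in-paper proof to compare against; your write-up is a self-contained substitute for the cited reference.

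On the mathematics itself: the defect identities
\[
I_{\mathbf{H}_u}-X_\mu^*X_\mu=\frac{1-|\mu|^2}{1-|u(0)|^2}\,\widetilde{k}_0\otimes\widetilde{k}_0,\qquad I_{\mathbf{H}_u}-X_\mu X_\mu^*=\frac{1-|\mu|^2}{1-|u(0)|^2}\,k_0\otimes k_0
\]
are computed correctly, and they immediately give the defect indices for $|\mu|<1$, unitarity for $|\mu|=1$, and (after inversion, with scalar $\mu^{-1}$ and the roles of $k_0,\widetilde{k}_0$ interchanged) the contraction property of $X_\mu^{-1}$ for $|\mu|>1$. Your reduction of complete nonunitarity in both cases $|\mu|\ne1$ to the complete nonunitarity of $S_u$---via $\mathcal{M}\perp k_0,\widetilde{k}_0$ and hence $X_\mu|\mathcal{M}=S_u|\mathcal{M}$, $X_\mu^*|\mathcal{M}=S_u^*|\mathcal{M}$---is clean and correct. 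The cyclicity argument for $|\mu|=1$ is also sound: once $\mathcal{N}^\perp\perp k_0,\widetilde{k}_0$, the space $\mathcal{N}^\perp$ is $U$-reducing inside $\mathbf{H}_u$, hence $S_u$-reducing with $S_u|\mathcal{N}^\perp$ unitary, forcing $\mathcal{N}^\perp=\{0\}$. This is exactly the kind of direct argument the Ball--Lubin reference would unpack, and it fits well with the paper's internal toolkit (the formulas for $k_0$, $\widetilde{k}_0$, and the fact that $S_u=U$ on $\mathbf{H}_u\ominus\mathbb{C}\widetilde{k}_0$).
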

\begin{cor}
\label{cor:commutant of perturbation} With the notation of Proposition
\emph{\ref{prop:about the various rank one perturbations}}, the commutant
of the operator $X_{\mu}$ is commutative for all $\mu\in\mathbb{C}\setminus\{0\}$.
The commutant of $X_{0}$ is also commutative if $u$ is not a constant
function.\end{cor}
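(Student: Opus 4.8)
The plan is to reduce $X_\mu$ to a model operator in each of the three regimes provided by Proposition~\ref{prop:about the various rank one perturbations} and then to invoke Lemma~\ref{lem:commutant commutative}. Two general facts are used throughout: commutativity of a commutant is invariant under unitary equivalence, and $\{T\}'=\{T^{-1}\}'$ whenever $T$ is invertible. When $|\mu|=1$, the operator $X_\mu$ is unitary with spectral multiplicity one, and a multiplicity-free normal operator has abelian commutant (its commutant equals the von Neumann algebra it generates), so $\{X_\mu\}'$ is commutative. When $0<|\mu|<1$, the operator $X_\mu$ is a completely nonunitary contraction with defect indices equal to $1$, hence unitarily equivalent to the model operator $S_v$ built from its characteristic function $v=\Theta_{X_\mu}$; by Lemma~\ref{lem:commutant commutative}, $\{X_\mu\}'$ is then commutative as soon as $v\not\equiv 0$. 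When $|\mu|>1$, the operator $X_\mu$ is invertible, so $\{X_\mu\}'=\{X_\mu^{-1}\}'$ and $X_\mu^{-1}\cong S_w$ for some purely contractive $w$, giving commutativity provided $w\not\equiv 0$. Everything thus reduces to showing that the relevant characteristic function is not identically zero.

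The key point is that $\Theta_T\equiv 0$ characterizes, up to unitary equivalence, the single model operator $S_0$, and $S_0$ is not injective: indeed $S_0\cong S\oplus S^*$, and directly its generating vector satisfies $S_0\widetilde{k}_0=0$. So to rule out $v\equiv 0$ it suffices to prove that $X_\mu$ is injective for every $\mu\neq 0$. I would verify this directly: writing $x=x_0+\alpha\widetilde{k}_0$ with $x_0\in\mathbf{H}_u\ominus\mathbb{C}\widetilde{k}_0$, one has $X_\mu x=S_u x_0+\alpha\mu k_0$, where $S_u x_0\in\mathbf{H}_u\ominus\mathbb{C}k_0$ is orthogonal to $k_0$. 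Hence $X_\mu x=0$ forces $S_u x_0=0$ and $\alpha\mu k_0=0$; since $S_u$ is isometric on $\mathbf{H}_u\ominus\mathbb{C}\widetilde{k}_0$ and $\mu\neq 0$, we get $x_0=0$ and $\alpha=0$, so $x=0$. For $|\mu|>1$ the operator $X_\mu^{-1}$ is invertible, hence injective, so $w\not\equiv 0$ as well. This settles all cases with $\mu\neq 0$.

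For $\mu=0$ the injectivity argument fails, because $X_0\widetilde{k}_0=0$, and this is exactly where the hypothesis that $u$ is nonconstant must be used. If $u(0)=0$ then $X_0=S_u$ and commutativity is immediate from Lemma~\ref{lem:commutant commutative}, since a nonconstant $u$ is not identically zero. In general $X_0=S_u+\delta^{-2}u(0)\,k_0\otimes\widetilde{k}_0$ with $\delta^2=1-|u(0)|^2$, and $X_0\cong S_{v_0}$ with $v_0=\Theta_{X_0}$; the task is to show $v_0\not\equiv 0$ whenever $u$ is nonconstant. I would read off the Taylor coefficients of $v_0$ from the expansion $v_0(\lambda)=\delta^{-2}\sum_{n\ge 0}\lambda^{n+1}\langle\widetilde{k}_0,X_0^n k_0\rangle$, using the explicit formulas for $k_0$, $\widetilde{k}_0$ and for the action of $S_u$ on $\mathbf{H}_u$. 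A short computation gives the first coefficient $\delta^{-2}\langle\widetilde{k}_0,k_0\rangle=\delta^{-2}\widehat{u}(1)$, and the main obstacle is to show, by induction on $n$, that the simultaneous vanishing $\langle\widetilde{k}_0,X_0^n k_0\rangle=0$ for all $n\ge 0$ forces every Fourier coefficient $\widehat{u}(k)$ with $k\ge 1$ to vanish, i.e. $u$ to be constant. This is the only genuinely computational step; it also shows that the hypothesis is sharp, since for constant $u$ one finds $v_0\equiv 0$ and $\{X_0\}'$ is not commutative.
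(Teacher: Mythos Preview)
Your overall architecture matches the paper's: split into the cases $|\mu|=1$, $|\mu|\neq 1$ with $\mu\neq 0$, and $\mu=0$; invoke spectral multiplicity for the unitary case; and otherwise reduce to Lemma~\ref{lem:commutant commutative} via Proposition~\ref{prop:about the various rank one perturbations}, after checking that the relevant characteristic function is not identically zero. The difference lies entirely in how you verify this last point. For $\mu\neq 0$ you give a clean and self-contained injectivity argument for $X_\mu$, which rules out $\Theta_{X_\mu}\equiv 0$ since the model operator $S_0$ annihilates $\widetilde{k}_0$; this is a nice alternative that avoids computing the characteristic function explicitly. For $\mu=0$, however, you propose an inductive computation of the Taylor coefficients $\delta^{-2}\langle\widetilde{k}_0,X_0^{n}k_0\rangle$ to show that their simultaneous vanishing forces $\widehat{u}(k)=0$ for all $k\ge 1$. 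This plan is sound in principle (your formula for $\Theta_{X_0}$ and the identification of the first coefficient with $\delta^{-2}\widehat{u}(1)$ are correct), but it is considerably more work than necessary. The paper simply invokes the fact---implicit in Proposition~\ref{prop:about the various rank one perturbations} and made explicit in Section~\ref{sec:An-analog-of Crofoot}---that the characteristic function of $X_\mu$ for $|\mu|<1$ is the M\"obius transform $u_\alpha=(u-\alpha)/(1-\overline{\alpha}u)$ with $\alpha$ chosen so that $u_\alpha(0)=-\mu$; for $\mu=0$ this gives $\alpha=u(0)$ and hence $\Theta_{X_0}=(u-u(0))/(1-\overline{u(0)}u)$, which vanishes identically precisely when $u$ is constant. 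Using this formula would let you dispense with the inductive computation entirely.
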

\begin{proof}
If $|\mu|\ne1$, the corollary follows from parts (1) and (2) of Proposition
\ref{prop:about the various rank one perturbations} and from Lemma
\ref{lem:commutant commutative}. The case $|\mu|=1$ is a consequence
of the general description of commutants of normal operators. The
case $\mu=0$ follows from the fact that the characteristic function
of $X_{0}$ is zero precisely when $u$ is a constant function.\end{proof}
\begin{cor}
\label{cor:commutants a la Sedlock} Let $\mu\in\mathbb{C}$, and
let $X_{\mu}\in\mathcal{B}(\mathbf{H}_{u})$ be the operator defined
in Example\emph{ \ref{exa:the rank one perturbations}}. Then every
operator $T\in\mathcal{B}(\mathbf{H}_{u})$ that commutes with either
$X_{\mu}$ or with $X_{\mu}^{*}$ is a truncated multiplication operator.\end{cor}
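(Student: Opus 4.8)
The plan is to reduce both hypotheses to the $S_u$-invariance characterization of $\mathcal{T}_u$ supplied by Theorem \ref{teorema principala}, exploiting the fact that $X_\mu$ differs from $S_u$ only by a rank one operator concentrated on $\widetilde{k}_0$. First I would record the structural relationship between $X_\mu$ and $S_u$. By Example \ref{exa:the rank one perturbations}, $X_\mu$ agrees with $S_u$ on $\mathbf{H}_u\ominus\mathbb{C}\widetilde{k}_0$ and sends $\widetilde{k}_0$ to $\mu k_0$. Hence $X_\mu-S_u$ is a rank one operator that vanishes on $\mathbf{H}_u\ominus\mathbb{C}\widetilde{k}_0$ and carries $\widetilde{k}_0$ to a multiple of $k_0$, so that $X_\mu=S_u+c\,k_0\otimes\widetilde{k}_0$ for some scalar $c$, and consequently $X_\mu^*=S_u^*+\overline{c}\,\widetilde{k}_0\otimes k_0$.

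The crux of the argument is the identity
\[
X_\mu^*X_\mu y=y,\qquad y\in\mathbf{H}_u\ominus\mathbb{C}\widetilde{k}_0,
\]
valid for \emph{every} $\mu$. Indeed, for such $y$ the perturbation term drops out, so $X_\mu y=S_u y$, which lies in $\mathbf{H}_u\ominus\mathbb{C}k_0$; therefore $\langle S_u y,k_0\rangle=0$, while $S_u^*S_u y=y$ because $y\in\ker(I_{\mathbf{H}_u}-S_u^*S_u)=\mathbf{H}_u\ominus\mathbb{C}\widetilde{k}_0$ by \eqref{eq:defect of S_u}. Combining these gives $X_\mu^*X_\mu y=S_u^*S_u y+\overline{c}\langle S_u y,k_0\rangle\widetilde{k}_0=y$. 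The content of this computation is that $X_\mu$ behaves like an isometry, in the strong sense $X_\mu^*X_\mu=I$, on exactly the subspace where $S_u$-invariance is tested, and it does so uniformly in $\mu$. This is the step I expect to be the main obstacle, in the sense that it is the one genuinely informative point; the attractive feature is that it lets one avoid the trichotomy of Proposition \ref{prop:about the various rank one perturbations}, where one might otherwise be tempted to treat $|\mu|<1$, $|\mu|=1$, and $|\mu|>1$ separately.

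With this in hand the deduction is formal. Assuming $TX_\mu=X_\mu T$, I would take any $x,y\in\mathbf{H}_u\ominus\mathbb{C}\widetilde{k}_0$ and compute, using $X_\mu x=S_u x$, $X_\mu y=S_u y$, commutativity, and the key identity,
\[
\langle TS_u x,S_u y\rangle=\langle TX_\mu x,X_\mu y\rangle=\langle X_\mu Tx,X_\mu y\rangle=\langle Tx,X_\mu^*X_\mu y\rangle=\langle Tx,y\rangle.
\]
Since $S_u x=Ux$ and $S_u y=Uy$ on $\mathbf{H}_u\ominus\mathbb{C}\widetilde{k}_0$, this is precisely the bilinear form of the $S_u$-invariance condition recorded before Theorem \ref{teorema principala}. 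Thus $T$ is $S_u$-invariant, and Theorem \ref{teorema principala} gives $T\in\mathcal{T}_u$.

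Finally I would dispatch the adjoint hypothesis by duality. If $TX_\mu^*=X_\mu^*T$, then taking adjoints yields $X_\mu T^*=T^*X_\mu$, so the case already treated shows $T^*\in\mathcal{T}_u$. Because $\mathcal{T}_u$ is an operator system, hence closed under taking adjoints, it follows that $T\in\mathcal{T}_u$ as well. This covers both hypotheses for all $\mu\in\mathbb{C}$ and completes the proof.
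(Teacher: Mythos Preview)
Your argument is correct and follows essentially the same route as the paper: the paper's key observation is that $\langle X_\mu h,Uk\rangle=\langle h,k\rangle$ whenever $k,Uk\in\mathbf{H}_u$, which is exactly the inner-product form of your identity $X_\mu^*X_\mu y=y$ on $\mathbf{H}_u\ominus\mathbb{C}\widetilde{k}_0$, and the ensuing computation and the adjoint reduction are the same. The only cosmetic difference is that you use the bilinear invariance condition while the paper uses the quadratic one (equivalent by polarization).
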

\begin{proof}
We observe first that $\langle X_{\mu}h,Uk\rangle=\langle h,k\rangle$
if $h,k,Uk\in\mathbf{H}_{u}$. This is immediate if $Uh\in\mathbf{H}_{u}$
as well. On the other hand, if $h=\widetilde{k}_{0}$, then $\langle X_{\mu}h,Uk\rangle=\langle h,k\rangle=0$.
Suppose now that $TX_{\mu}=X_{\mu}T$ and $k,Uk\in\mathbf{H}_{u}$.
Then
\[
\langle TUk,Uk\rangle=\langle TX_{\mu}k,Uk\rangle=\langle X_{\mu}Tk,Uk\rangle=\langle Tk,k\rangle,
\]
by the preceding observation applied to $h=Tk$. Thus $T$ is $S_{u}$-invariant
and $T\in\mathcal{T}_{u}$ by Theorem \ref{teorema principala}. If
$TX_{\mu}^{*}=X^{*}T$ then the above argument shows that $T_{\mu}^{*}\in\mathcal{T}_{u}$
and thus $T\in\mathcal{T}_{u}$ because $\mathcal{T}_{u}$ is a selfadjoint
space.\end{proof}
\begin{rem}
\label{rem:Sedlock algebras} In the case in which $u$ is an inner
function, it was shown in \cite{sedlo} that every algebra contained
$\mathcal{T}_{u}$ is commutative and is contained either in $\{X_{\mu}\}'$ or in $\{X_{\mu}^{*}\}'$
for some $\mu\in\mathbb{C}.$ It would be interesting to see whether
this result remains true if $u$ is not inner. Note, incidentally,
that $\mathcal{T}_{u}$ does contain a noncommutative algebra if $u$
is a constant function, namely the commutant of $X_{0}$ (see Example
\ref{exa:u=00003D0}).
\begin{rem}
\label{rem:deB-Rov}In case $u$ is an extreme point of the unit ball
of $H^{\infty}$, it is known (see, for instance, \cite[Chapter IV]{sub-hardy})
that the projection onto the first component yields a unitary operator
$J:\mathbf{H}_{u}\to\mathcal{H}(u)$, where $\mathcal{H}(u)$ is the
de Branges-Rovnyak space associated to $u$. The operator $X=JS_{u}^{*}J^{*}$
is precisely the restriction to $\mathcal{H}(u)$ of the backward
shift $f\mapsto\overline{\chi}(f-f(0))$. Therefore, Theorem \ref{teorema principala}
yields a characterization of those operators in $\mathcal{B}(\mathcal{H}(u))$
that are $X$-invariant.
\end{rem}
\end{rem}

\section{\label{sec:Nonuniqueness-of-the symbol}Nonuniqueness of the symbol
of a truncated multiplication operator}

As noted earlier, the symbol of an operator in $\mathcal{T}_{u}$
is not unique. The proof of Theorem \ref{teorema principala} shows
that a certain sequence related with an operator $T\in\mathcal{T}_{u}$
converges in the weak operator topology. The following result identifies
that limit in terms of an arbitrary symbol for $T$.
\begin{prop}
\label{prop:limit for uniqueness}Suppose that $T\in\mathcal{B}(\mathbf{H}_{u})$
is a truncated multiplication operator with symbol
\[
\left[\begin{array}{cc}
a & b\\
c & d
\end{array}\right].
\]
Then $d$ is essentially bounded on $\{\zeta\in\mathbb{T}:\Delta(\zeta)\ne0\}$,
and the sequence $\{S_{u}^{n}TS_{u}^{*n}\}_{n\in\mathbb{N}}$ converges
in the weak operator topology to the truncated multiplication operator
with symbol
\[
\left[\begin{array}{cc}
0 & 0\\
0 & d
\end{array}\right].
\]
 In particular, the function $d$ is uniquely determined almost everywhere
on $\{\zeta\in\mathbb{T}:\Delta(\zeta)\ne0\}$.\end{prop}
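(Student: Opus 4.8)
The plan is to compute the matrix coefficients of $S_u^nTS_u^{*n}$ directly on the dense manifold $\mathbf{H}_u^\infty$ and read off the limit. Write the given symbol as $F=\left[\begin{array}{cc}a&b\\c&d\end{array}\right]$ and fix $x=g\oplus h$ and $y=g'\oplus h'$ in $\mathbf{H}_u^\infty$, so that $g,g'\in H^\infty$ and $h,h'\in L^\infty\cap(\Delta L^2)^-$. The first thing I would record is an explicit formula for $S_u^{*n}$. Taking adjoints in $S_u^n=P_{\mathbf{H}_u}U^n|\mathbf{H}_u$ gives $S_u^{*n}=P_{\mathbf{H}_u}U^{*n}|\mathbf{H}_u$, and since $x\in\mathbf{H}_u$ forces $\overline{u}g+\Delta h\in L^2\ominus H^2$, the $\mathbf{G}$-component of $U^{*n}x=\overline{\chi}^n g\oplus\overline{\chi}^n h$ vanishes. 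Hence $S_u^{*n}x=\sigma_n\oplus\overline{\chi}^n h$ with $\sigma_n=P_{H^2}(\overline{\chi}^n g)=\overline{\chi}^n\bigl(g-\sum_{0\le j<n}\langle g,\chi^j\rangle\chi^j\bigr)$; in particular the second component is untouched, $\|\sigma_n\|_2^2=\sum_{m\ge n}|\langle g,\chi^m\rangle|^2\to0$, and $S_u^{*n}x\in\mathbf{H}_u^\infty$. The latter lets me use $A_Fv=P_{\mathbf{H}_u}M_Fv$ to write $\langle S_u^nTS_u^{*n}x,y\rangle=\langle M_FS_u^{*n}x,S_u^{*n}y\rangle$.

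Expanding the matrix product (with $\sigma_n'=P_{H^2}(\overline{\chi}^n g')$) gives
\[
\langle M_FS_u^{*n}x,S_u^{*n}y\rangle=\langle a\sigma_n,\sigma_n'\rangle+\langle b\overline{\chi}^n h,\sigma_n'\rangle+\langle c\sigma_n,\overline{\chi}^n h'\rangle+\langle d\overline{\chi}^n h,\overline{\chi}^n h'\rangle .
\]
The last term equals $\langle dh,h'\rangle$ for every $n$ because multiplication by $\overline{\chi}^n$ is unitary. The second term is $\langle\overline{\chi}^n(bh),\sigma_n'\rangle$ with $bh\in L^2$, bounded by $\|bh\|_2\|\sigma_n'\|_2\to0$; the third is $\langle\chi^n c\sigma_n,h'\rangle$ with $\|c\sigma_n\|_1\le\|c\|_2\|\sigma_n\|_2\to0$ and $h'\in L^\infty$, so it too tends to $0$. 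The remaining term $\langle a\sigma_n,\sigma_n'\rangle$ is the main obstacle: here $a$ is only in $L^2$ and the $L^\infty$ norm of $\sigma_n$ may grow with $n$, so $L^2$-smallness of $\sigma_n$ does not suffice. My resolution is to invoke the M.\ Riesz theorem: the analytic partial-sum projections are uniformly bounded on $H^p$ for $1<p<\infty$, whence $\|\sigma_n\|_4=\|g-\sum_{j<n}\langle g,\chi^j\rangle\chi^j\|_4\to0$ (and $\|\sigma_n'\|_4\to0$) because $g,g'\in H^\infty\subset H^4$. Then $|\langle a\sigma_n,\sigma_n'\rangle|\le\|a\|_2\|\sigma_n\|_4\|\sigma_n'\|_4\to0$. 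I expect this $L^4$ estimate to be the delicate point of the argument.

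Putting the terms together, $\langle S_u^nTS_u^{*n}x,y\rangle\to\langle dh,h'\rangle$ for all $x,y\in\mathbf{H}_u^\infty$. With the uniform bound $\|S_u^nTS_u^{*n}\|\le\|T\|$ this yields weak operator convergence to an operator $T'$ with $\langle T'x,y\rangle=\langle dh,h'\rangle$ (this reproves the convergence already obtained in the proof of Theorem \ref{teorema principala}). Since $\|S_u^{*n}x\|^2=\|\sigma_n\|_2^2+\|h\|_2^2\to\|h\|_2^2$, the same estimate gives $|\langle dh,h'\rangle|\le\|T\|\,\|h\|_2\|h'\|_2$. As $h,h'$ range over the second components of the spanning vectors $P_{\mathbf{H}_u}(\chi^k\oplus0)$ and $P_{\mathbf{H}_u}(\chi^{-k}u\oplus\chi^{-k}\Delta)$ — which include every $\chi^m\Delta$ and are therefore dense in $(\Delta L^2)^-$ — this shows that multiplication by $d$ is bounded on $(\Delta L^2)^-$, i.e.\ $d$ is essentially bounded on $\{\Delta\ne0\}$ with $\|d\|_\infty\le\|T\|$. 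Consequently $A_{F_0}$ with $F_0=\left[\begin{array}{cc}0&0\\0&d\end{array}\right]$ is a bounded truncated multiplication operator whose coefficients are exactly $\langle dh,h'\rangle$, so $T'=A_{F_0}$. Finally, because $T'$ depends only on $T$, the identity $\langle T'x,y\rangle=\langle dh,h'\rangle$ forces any two symbols of $T$ to agree in the $(2,2)$-entry a.e.\ on $\{\Delta\ne0\}$, giving the stated uniqueness.
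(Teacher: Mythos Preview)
Your approach mirrors the paper's: both compute $\langle TS_u^{*n}x,S_u^{*n}y\rangle$ on $\mathbf{H}_u^\infty$, expand into four terms, and use M.~Riesz to kill the $a$-term. Your use of $L^4$ convergence of Fourier partial sums is a legitimate variant of the paper's interpolation between $L^2$ and $L^6$, and your $L^2$ estimates for the $b$- and $c$-terms are in fact simpler than the paper's $L^4$ estimates.

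The genuine gap is in your density step for the boundedness of $d$. You assert that the second components of the spanning vectors ``include every $\chi^m\Delta$''. For $m<0$ this is immediate from $P_{\mathbf{H}_u}(\chi^{-k}u\oplus\chi^{-k}\Delta)$, but for $m\ge0$ the second component of $P_{\mathbf{H}_u}(\chi^k\oplus0)$ is the combination $-\sum_{j=0}^k\overline{\alpha_{k-j}}\chi^j\Delta$, not a single monomial. One can recover each $\chi^j\Delta$ from these by a triangular inversion, but only when $u\not\equiv0$; when $u\equiv0$ every such second component vanishes, the second components of $\mathbf{H}_u^\infty$ reduce to $L^\infty\cap(L^2\ominus H^2)$, and density in $(\Delta L^2)^-=L^2$ fails outright. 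Your subsequent sentence (``this shows that multiplication by $d$ is bounded'') then needs its own justification even when density holds. The paper sidesteps all of this by noting the shift invariance $\langle dh,h'\rangle=\langle d\chi^m h,\chi^m h'\rangle$, which converts the inequality for $h,h'$ in the span of $\{\overline{\chi}^j\Delta:j\ge1\}$ into the same inequality for $h=p\Delta$, $h'=q\Delta$ with arbitrary trigonometric polynomials $p,q$, and then localizes to conclude $|d|\le\|T\|$ on $\{\Delta\ne0\}$. Inserting this shift-invariance step would close your argument uniformly in $u$.
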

\begin{proof}
Let $x=g\oplus h$ and $x'=g'\oplus h'$ be two vectors in $\mathbf{H}_{u}$.
We have
\[
\langle S_{u}^{n}TS_{u}^{*n}x,x'\rangle=\langle TS_{u}^{*n}x,S_{u}^{*n}x'\rangle,\quad n\in\mathbb{N},
\]
and $S_{u}^{*n}x=P_{+}(\overline{\chi}^{n}g)\oplus\overline{\chi}^{n}h$,
where $P_{+}:L^{2}\to H^{2}$ denotes the orthogonal projection. By
the M. Riesz theorem, $P_{+}$ also defines a bounded operator on $L^{p}$
for $p\in(2,+\infty)$. We have $g\in L^{\infty}\subset L^{6},$ $\lim_{n\to\infty}\|P_{+}(\overline{\chi}^{n}g)\|_{2}=0$,
and an application of the H\"older inequality shows that
\[
\|P_{+}(\overline{\chi}^{n}g)\|_{4}\le\|P_{+}(\overline{\chi}^{n}g)\|_{2}^{1/4}\|P_{+}(\overline{\chi}^{n}g)\|_{6}^{3/4},\quad n\in\mathbb{N}.
\]
We deduce that $\lim_{n\to\infty}\|P_{+}(\overline{\chi}^{n}g)\|_{4}=0$.
Similarly, $\lim_{n\to\infty}\|P_{+}(\overline{\chi}^{n}g')\|_{4}=0$.
Expand now
\begin{eqnarray*}
\langle TS_{u}^{*n}x,S_{u}^{*n}x'\rangle & = & \langle aP_{+}(\overline{\chi}^{n}g),P_{+}(\overline{\chi}^{n}g')\rangle+\langle b\overline{\chi}^{n}h,P_{+}(\overline{\chi}^{n}g')\rangle\\
 &  & +\langle cP_{+}(\overline{\chi}^{n}g),\overline{\chi}^{n}h'\rangle+\langle d\overline{\chi}^{n}h,\overline{\chi}^{n}h'\rangle.
\end{eqnarray*}
The fourth term on the right hand side is equal to $\langle dh,h'\rangle=\langle P_{\mathbf{H}_{u}}(0\oplus dh),x'\rangle$
for every $n\in\mathbb{N}$, and we show that the remaining three
terms converge to zero as $n\to\infty$. The H\"older inequality
yields
\begin{eqnarray*}
|\langle aP_{+}(\overline{\chi}^{n}g),P_{+}(\overline{\chi}^{n}g')\rangle| & \le & \|a\|_{2}\|P_{+}(\overline{\chi}^{n}g)\|_{4}\|P_{+}(\overline{\chi}^{n}g')\|_{4},\\
|\langle b\overline{\chi}^{n}h,P_{+}(\overline{\chi}^{n}g')\rangle| & \le & \|b\|_{2}\|h\|_{4}\|cP_{+}(\overline{\chi}^{n}g')\|_{4},\\
|\langle cP_{+}(\overline{\chi}^{n}g),\overline{\chi}^{n}h'\rangle| & \le & \|c\|_{2}\|cP_{+}(\overline{\chi}^{n}g)\|_{4}\|h'\|_{4},
\end{eqnarray*}
and the sequences in the right hand side tend to zero, as shown above.
We also see that $|\langle dh,h'\rangle|\le\|T\|\|h\|_{2}\|h'\|_{2}$. 

To conclude the proof, we deduce from this inequality that $d$ is
essentially bounded on $\{\zeta\in\mathbb{T}:\Delta(\zeta)\ne0\}$.
We observe that
\[
U_{+}^{*n}(u\oplus\Delta)=P_{+}(\overline{\chi}^{n}u)\oplus\overline{\chi}^{n}\in\mathbf{H}_{u}^{\infty},
\]
and thus $|\langle dh,h'\rangle|\le\|T\|\|h\|_{2}\|h'\|_{2}$ if $h$
and $h'$ are of the form
\[
h=\sum_{j=1}^{n}\alpha_{j}\overline{\chi}^{j},\quad h'=\sum_{j=1}^{n}\alpha'_{j}\overline{\chi}^{j}
\]
 for some $n\in\mathbb{N}$ and $\alpha_{1},\dots,\alpha_{n},\alpha'_{1},\dots,\alpha'_{n}\in\mathbb{C}$.
Moreover, since $\langle dh,h'\rangle=\langle d\chi^{m}h,\chi^{m}h'\rangle$
for every $m\in\mathbb{N},$ we must have $|\langle dh,h'\rangle|\le\|T\|\|h\|_{2}\|h'\|_{2}$
whenever $h=p\Delta$ and $h'=q\Delta$ for some trigonometric polynomials
$p$ and $q$. Since the trigonometric polynomials form a dense linear
manifold in $L^{2}$, we see that 
\begin{equation}
|\langle df\Delta,g\Delta\rangle|\le\|T\|\|f\Delta\|_{2}\|g\Delta\|_{2}\label{eq:o inegalitate}
\end{equation}
 for every pair $f,g$ of functions in $L^{2}$. This finally implies
that $|d|\le\|T\|$ almost everywhere on $\{\zeta\in\mathbb{T}:\Delta(\zeta)\ne0\}$.
Indeed, in the contrary case, there exist $\varepsilon,M>0$ such
that $\|T\|+\varepsilon\le|d|\le M$ on a set $\sigma\subset\{\zeta\in\mathbb{T}:\Delta(\zeta)\ne0\}$
of positive arclength. Then the choice $f=\overline{d}\chi_{\sigma},g=1$
contradicts (\ref{eq:o inegalitate}).
\end{proof}
We can now describe all the symbols associated to the zero operator.
\begin{prop}
\label{prop:symbols for the zero operator}Suppose that $T\in\mathcal{B}(\mathbf{H}_{u})$
is a truncated multiplication operator with symbol
\[
\left[\begin{array}{cc}
a & b\\
c & d
\end{array}\right],
\]
where $a,b,c\in L^{2}$ and $d\in L^{\infty}$. Then $T=0$ if and
only the following two conditions are satisfied\emph{:}
\begin{enumerate}
\item [\rm(1)]$d=0$ almost everywhere on the set $\{\zeta\in\mathbb{T}:\Delta(\zeta)\ne0\}$.
\item [\rm(2)]There exist functions $f_{1},f_{2}\in H^{2}$ such that\emph{:}

\begin{enumerate}
\item [\rm(a)]$a=uf_{1}+\overline{uf_{2}},$
\item [\rm(b)]$c=\Delta f_{1}$ and $b=\Delta\overline{f}_{2}$ almost
everywhere on the set $\{\zeta\in\mathbb{T}:\Delta(\zeta)\ne0\}$.
\end{enumerate}
\end{enumerate}
\end{prop}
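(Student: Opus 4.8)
The plan is to recast the condition $T=0$ geometrically and then read the structure of the symbol off the orthogonal decomposition $\mathbf{K}=\mathbf{H}_u\oplus\mathbf{G}\oplus(\mathbf{K}\ominus\mathbf{K}_+)$, that is,
\[
\mathbf{H}_u^{\perp}=\mathbf{G}\oplus\bigl[(L^2\ominus H^2)\oplus 0\bigr],
\]
so that $T=A_F=0$ is equivalent to $M_F v\in\mathbf{H}_u^{\perp}$ for every $v\in\mathbf{H}_u^{\infty}$. Condition (1) will be immediate: since $T=0$ is also represented by the zero symbol, the uniqueness statement of Proposition \ref{prop:limit for uniqueness} forces $d=0$ a.e. on $\{\Delta\neq 0\}$. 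Because modifying $b,c,d$ on $\{\Delta=0\}$ does not change $M_F$, I will then assume $d\equiv 0$, so that $M_F(f\oplus g)=(af+bg)\oplus cf$.

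To produce $f_1$ and $f_2$ I would decompose $M_F v=(u\phi_v+\psi_v)\oplus\Delta\phi_v$ into its (unique) components in $\mathbf{G}$ and in $(L^2\ominus H^2)\oplus 0$, with $\phi_v\in H^2$; comparing second coordinates gives $cf=\Delta\phi_v$ a.e.\ on $\{\Delta\neq 0\}$ for every $v=f\oplus g\in\mathbf{H}_u^{\infty}$. Applying this to $v=k_0\in\mathbf{H}_u^{\infty}$, whose first coordinate $f^0=1-\overline{u(0)}u$ satisfies $|f^0|\geq 1-|u(0)|>0$ on $\mathbb{T}$ and is therefore invertible in $H^{\infty}$, I set $f_1:=\phi_{k_0}/f^0\in H^2$ and obtain $c=\Delta f_1$ on $\{\Delta\neq 0\}$. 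Since $T=0$ forces $A_{F^*}\subseteq(A_F)^{*}=0$, the same reasoning applied to $F^{*}=\left[\begin{smallmatrix}\overline a&\overline c\\\overline b&0\end{smallmatrix}\right]$ (whose relevant second coordinate is $\overline b\,p$) yields $f_2\in H^2$ with $b=\Delta\overline{f_2}$ on $\{\Delta\neq 0\}$. This is exactly (2)(b).

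For (2)(a) the main tool is a computation that also settles the converse. Setting $F_0=\left[\begin{smallmatrix}uf_1+\overline{uf_2}&\Delta\overline{f_2}\\\Delta f_1&0\end{smallmatrix}\right]$ and using $\Delta g=r-\overline u f$ with $r=\overline u f+\Delta g\in L^2\ominus H^2$ (the defining relation for $f\oplus g\in\mathbf{H}_u$), together with the fact that $\overline{f_2}r$ has only strictly negative Fourier coefficients, I expect to find
\[
M_{F_0}(f\oplus g)=\bigl[u(f_1f)\oplus\Delta(f_1f)\bigr]+\bigl[\overline{f_2}r\oplus 0\bigr]\in\mathbf{H}_u^{\perp},
\]
so $A_{F_0}=0$. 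Subtracting, $A_{F-F_0}=0$; but $F-F_0$ differs from $G:=\left[\begin{smallmatrix}\Phi&0\\0&0\end{smallmatrix}\right]$, where $\Phi:=a-uf_1-\overline{uf_2}$, only in the entries $b,c,d$ on $\{\Delta=0\}$, hence $A_G=0$. This means $P_{\mathbf{H}_u}(\Phi f\oplus 0)=0$, i.e.\ $\Phi f$ is orthogonal to every first coordinate of a vector in $\mathbf{H}_u$, for all first coordinates $f$ of vectors in $\mathbf{H}_u^{\infty}$.

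The hard part — and the only place where $u$ being non-inner is essential — is to upgrade this orthogonality to $\Phi=0$. The key point will be that the first coordinates of $\mathbf{H}_u$ are \emph{dense} in $H^2$: if $g\in H^2$ is orthogonal to all of them, then $g\oplus 0\in\mathbf{K}_+\ominus\mathbf{H}_u=\mathbf{G}$, so $\Delta\phi=0$ with $g=u\phi$ for some $\phi\in H^2$, and since $\{\Delta\neq 0\}$ has positive measure this gives $\phi=0$ and $g=0$; the same density then holds for the (norm-dense) first coordinates of $\mathbf{H}_u^{\infty}$. (For inner $u$ this fails, these coordinates spanning only $H^2\ominus uH^2$, which is why the proof is the one exception noted for the inner case.) Consequently $\Phi f\perp H^2$, i.e.\ $\widehat{\Phi f}(n)=0$ for all $n\geq 0$, for $f$ in a dense subset of $H^2$; since $f\mapsto\widehat{\Phi f}(n)=\langle f,\overline{\Phi}\chi^{n}\rangle$ is $L^2$-continuous, this extends to all $f\in H^2$, and taking $f=\chi^{m}$ shows $\widehat{\Phi}\equiv 0$, whence $\Phi=0$ and (2)(a) holds. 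The converse is already contained above: assuming (1) and (2), one has $F=F_0$ up to a harmless modification on $\{\Delta=0\}$, and the displayed identity gives $M_{F_0}(\mathbf{H}_u^{\infty})\subseteq\mathbf{H}_u^{\perp}$, i.e.\ $T=0$.
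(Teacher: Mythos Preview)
Your argument is correct and follows the paper's architecture almost exactly: use Proposition~\ref{prop:limit for uniqueness} for condition~(1), test at $k_0$ (and at $k_0$ for $F^*$) to produce $f_1,f_2\in H^2$ via the invertibility of $1-\overline{u(0)}u$ in $H^\infty$, subtract the symbol $F_0=F_1+F_2^*$ (your $F_0$ is literally the paper's $F_1+F_2^*$), and then show the scalar remainder $\Phi=a-uf_1-\overline{uf_2}$ vanishes. Your converse computation $M_{F_0}(f\oplus g)=[u(f_1f)\oplus\Delta(f_1f)]+[\overline{f_2}r\oplus 0]$ is just the paper's verification of $A_{F_1}=0$ and $A_{F_2^*}=0$ written in one line.

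The only genuine difference is in the endgame for $\Phi=0$. The paper evaluates \emph{only} at $k_0$: from $A_G k_0=0$ and $A_G^*k_0=0$ (and the fact that $\Delta\phi=0\Rightarrow\phi=0$ kills any $\mathbf{G}$-component) it gets $a_0(1-\overline{u(0)}u)\in L^2\ominus H^2$ and $\overline{a_0}(1-\overline{u(0)}u)\in L^2\ominus H^2$, and then a one-line analytic/co-analytic separation forces both sides to zero. You instead reformulate the same Riesz-type injectivity as the density of first coordinates of $\mathbf{H}_u$ in $H^2$, deduce $\Phi f\perp H^2$ for all bounded first coordinates, and pass to $f=\chi^m$ by $L^2$-continuity of the Fourier-coefficient functionals. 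Both are short; the paper's version avoids the limiting step and needs only the single test vector $k_0$, while yours makes the role of the non-inner hypothesis (density versus its failure when $u$ is inner) more transparent.
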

\begin{proof}
The case in which $u$ is an inner function is proved in \cite[Theorem 3.1]{sar-TTO}.
Therefore we may, and do, assume that $u$ is not inner, and thus the set $\{\zeta\in\mathbb{T}:\Delta(\zeta)\ne0\}$
has positive arclength.

Suppose first that conditions (1) and (2) are satisfied and set
\begin{equation}
F_{1}=\left[\begin{array}{cc}
uf_{1} & 0\\
\Delta f_{1} & 0
\end{array}\right],\quad F_{2}=\left[\begin{array}{cc}
uf_{2} & 0\\
\Delta f_{2} & 0
\end{array}\right],\label{eq:F_j}
\end{equation}
so $F=F_{1}+F_{2}^{*}$. If $x=h\oplus g$ is an arbitrary element
of $\mathbf{H}_{u}^{\infty},$ we have 
\[
A_{F_{1}}x=uf_{1}h\oplus\Delta f_{1}h\in\mathbf{G},
\]
and thus $A_{F_{1}}=0$. Similarly, $A_{F_{2}}^{*}=0$, and therefore
$T|\mathbf{H}_{u}^{\infty}=A_{F}=0$. 

Conversely, suppose that $T=0$. Condition (1) follows from Proposition
\ref{prop:limit for uniqueness}. In addition, we have $Tk_{0}=T^{*}k_{0}=0$.
Since $k_{0}=(1-\overline{u(0)}u)\oplus(-\overline{u(0)}\Delta)$,
the vectors
\begin{eqnarray*}
Fk_{0} & = & [a(1-\overline{u(0)}u)-b\overline{u(0)}\Delta]\oplus[c(1-\overline{u(0)}u)],\\
F^{*}k_{0} & = & [\overline{a}(1-\overline{u(0)}u)-\overline{cu(0)}\Delta]\oplus[\overline{b}(1-\overline{u(0)}u)],
\end{eqnarray*}
must belong to $\mathbf{H}_{u}^{\perp}=[H^{2\perp}\oplus\{0\}]+\mathbf{G},$
that is,
\begin{eqnarray*}
Fk_{0} & = & (g_{1}+uh_{1})\oplus(\Delta h_{1}),\\
F^{*}k_{0} & = & (g_{2}+uh_{2})\oplus(\Delta h_{2}),
\end{eqnarray*}
for some $g_{1},g_{2}\in H^{2\perp}$ and $h_{1},h_{2}\in H^{2}$.
Equating the second components, we see that
\[
c=\frac{\Delta h_{1}}{1-\overline{u(0)}u}=\Delta f_{1},\quad\overline{b}=\frac{\Delta h_{2}}{1-\overline{u(0)}u}=\Delta f_{2},
\]
where $f_{j}=h_{j}/(1-\overline{u(0)}u)\in H^{2}$ for $j=1,2$. Define
now $F_{1}$ and $F_{2}$ by (\ref{eq:F_j}) and set 
\[
F_{0}=F-F_{1}-F_{2}^{*}=\left[\begin{array}{cc}
a_{0} & 0\\
0 & 0
\end{array}\right],
\]
where $a_{0}=a-uf_{1}-\overline{uf_{2}}$. The hypothesis and the
first part of the proof show that $F_{0}$ is also a symbol of the
zero operator, and thus the vectors
\[
F_{0}k_{0}=a_{0}(1-\overline{u(0)}u)\oplus0,\quad F_{0}^{*}k_{0}=\overline{a_{0}}(1-\overline{u(0)}u),
\]
must belong to $\mathbf{H}_{u}^{\perp}$. Observe that, given $f\in H^{2}$,
the equality $\Delta f=0$ implies that $f$ vanishes almost everywhere
on the set $\{\zeta\in\mathbb{T}:\Delta(\zeta)\ne0\}$, and thus $f=0$
by the F. and M. Riesz theorem. Therefore there exist functions $g_{1},g_{2}\in H^{2\perp}$
such that
\[
a_{0}(1-\overline{u(0)}u)=g_{1},\quad\overline{a_{0}}(1-\overline{u(0)}u)=g_{2}.
\]
We have then
\[
a_{0}=\frac{g_{1}}{1-\overline{u(0)}u}=\frac{\overline{g_{2}}}{1-u(0)\overline{u}},
\]
so
\[
g_{1}(1-u(0)\overline{u})=\overline{g_{2}}(1-\overline{u(0)}u).
\]
The left hand side of this equality has a Fourier series with no analytic
terms, while the right hand side has only analytic terms. We conclude
that $g_{1}=g_{2}=0$, thus establishing that $F_{0}=0$. The proposition
follows.\end{proof}
\begin{example}
\label{exa:u=00003D0}We examine the special case $u=0$. In this
case, $\mathbf{K}_{+}=H^{2}\oplus L^{2}$, $\mathbf{G}=\{0\}\oplus H^{2}$,
 $\mathbf{H}_{u}=H^{2}\oplus(L^{2}\ominus H^{2})$, and thus the
operator $S_{u}$ is of the form $A\oplus B$, where $A$ is the forward
shift on $H^{2}$ and $B$ is the forward (co-isometric) shift on
$L^{2}\ominus H^{2}$. (In other words, $S_{u}$ is unitarily equivalent
to $A\oplus A^{*}$.) A symbol
\[
F=\left[\begin{array}{cc}
a & b\\
c & d
\end{array}\right]
\]
represents the zero operator in $\mathcal{T}_{u}$ precisely when
$a=d=0$ and $\overline{b},c\in H^{2}$. In particular, the (1,1)
and (2,2) entries of the symbol of an operator $T\in\mathcal{T}_{u}$
are uniquely determined by $T$. The operators that commute with $S_{u}$
are described, using the commutant lifting theorem, as the truncated
multiplication operators with a symbol of the form 
\[
F=\left[\begin{array}{cc}
a & 0\\
c & d
\end{array}\right],
\]
where $a,d\in H^{\infty}$ and $b\in L^{\infty}$. The commutant of
$S_{u}$ is not commutative, as illustrated by the operators $T_{1}$
and $T_{2}$ with symbols
\[
F_{1}=\left[\begin{array}{cc}
1 & 0\\
0 & 0
\end{array}\right]\text{ and }F_{2}=\left[\begin{array}{cc}
0 & 0\\
\overline{\chi} & 0
\end{array}\right],
\]
for which $T_{1}T_{2}=0$ and $T_{2}T_{1}=T_{2}\ne0$.
\end{example}

\section{An analog of the Crofoot operator\label{sec:An-analog-of Crofoot}}

Suppose that $u\in H^{\infty}$ satisfies $\|u\|_{\infty}\le1$ and
$|u(0)|<1$. The operator $X_{\mu}$ introduced in Example \ref{exa:the rank one perturbations}
is a completely nonunitary contractions with defect indices equal
to $1$ provided that $|\mu|<1$. The caracteristic function of $X_{\mu}$
is equal to
\[
u_{\alpha}=\frac{u-\alpha}{1-\overline{\alpha}u}\in H^{\infty},
\]
where $\alpha\in\mathbb{D}$ is chosen such that $u_{\alpha}(0)=-\mu$.
Thus, there exists a unitary operator in $\mathcal{B}(\mathbf{H}_{u},\mathbf{H}_{u_{\alpha}})$, uniquely determined up to a constant factor of modulus $1$,
that intertwines $X_{\mu}$ and $S_{u_{\alpha}}$. This unitary
operator was first written explicitly by Crofoot \cite{crof} for
the case in which $u$ is inner, and thus $u_{\alpha}$ is inner as
well. He showed that it is the restriction to $\mathbf{H}_{u}$ of
the multiplication operator by a function in $H^{\infty}$. We prove
an analogous result for arbitrary purely contractive functions $u\in H^{\infty}$.
To begin with, a simple calculation shows that the function $\Delta_{\alpha}=(1-|u_{\alpha}|^{2})^{1/2}$
satisfies
\[
\Delta_{\alpha}=\frac{(1-|\alpha|^{2})^{1/2}}{|1-\overline{\alpha}u|}\Delta,
\]
so $(\Delta_{\alpha}L^{2})^{-}=(\Delta L^{2})^{-}$, and therefore
$\mathbf{H}_{u}$ and $\mathbf{H}_{u_{\alpha}}$ are both subspaces
of $\mathbf{K}$. More precisely,
\[
\mathbf{H}_{u_{\alpha}}=\mathbf{K}_{+}\ominus\mathbf{G}_{\alpha},
\]
where
\[
\mathbf{G}_{\alpha}=\{u_{\alpha}f\oplus\Delta_{\alpha}f:f\in H^{2}\}.
\]
We consider the bounded measurable function $F_{\alpha}$ defined
by
\begin{eqnarray*}
F_{\alpha} & = & \left[\begin{array}{cc}
(1-|\alpha|^{2})^{1/2}(1-\overline{\alpha}u)^{-1} & 0\\
\overline{\alpha}\Delta|1-\overline{\alpha}u|^{-1} & (1-\overline{\alpha}u)|1-\overline{\alpha}u|^{-1}
\end{array}\right]\\
 & = & \left[\begin{array}{cc}
(1-|\alpha|^{2})^{1/2}(1-\overline{\alpha}u)^{-1} & 0\\
\overline{\alpha}(1-|\alpha|^{2})^{-1/2}\Delta_{\alpha} & (1-\overline{\alpha}u)|1-\overline{\alpha}u|^{-1}
\end{array}\right].
\end{eqnarray*}
Since the $(1,1)$ entry of $F_{\alpha}$ belongs to $H^{\infty}$,
it follows that $M_{F_{\alpha}}$ leaves $\mathbf{K}_{+}$ invariant. 
\begin{prop}
\label{prop:crofoot}The operator $M_{F_{\alpha}}$ maps $\mathbf{H}_{u}$
isometrically onto $\mathbf{H}_{u_{\alpha}}$.\end{prop}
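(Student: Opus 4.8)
The plan is to establish separately the three ingredients of the claim: that $M_{F_{\alpha}}$ carries $\mathbf{H}_{u}$ into $\mathbf{H}_{u_{\alpha}}$, that it preserves norms on $\mathbf{H}_{u}$, and that it is onto. Throughout I abbreviate $w=1-\overline{\alpha}u$, and I record the two facts that will be used repeatedly: since $|w|\ge 1-|\alpha|>0$, both $w^{-1}=(1-\overline{\alpha}u)^{-1}\in H^{\infty}$ and $1/\overline{w}=(1-\alpha\overline{u})^{-1}\in\overline{H^{\infty}}$ are bounded.

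First I would check that $M_{F_{\alpha}}$ maps $\mathbf{H}_{u}$ into $\mathbf{H}_{u_{\alpha}}$. Writing $x=f\oplus g\in\mathbf{H}_{u}$ and $f'\oplus g'=M_{F_{\alpha}}x$, the first component $f'=(1-|\alpha|^{2})^{1/2}w^{-1}f$ lies in $H^{2}$ because $w^{-1}\in H^{\infty}$, while $g'$ lies in $(\Delta_{\alpha}L^{2})^{-}=(\Delta L^{2})^{-}$ since one summand carries a factor $\Delta$ and the other is a unimodular multiple of $g$; hence $M_{F_{\alpha}}x\in\mathbf{K}_{+}$. By the membership criterion recorded in Section~\ref{sec:Preliminaries} (applied to $u_{\alpha}$), it remains to verify that $\overline{u_{\alpha}}f'+\Delta_{\alpha}g'\in L^{2}\ominus H^{2}$. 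A direct computation, simplified by the algebraic identity $\overline{u}-\overline{\alpha}+\overline{\alpha}\Delta^{2}=\overline{u}\,w$, should collapse to
\[
\overline{u_{\alpha}}f'+\Delta_{\alpha}g'=\frac{(1-|\alpha|^{2})^{1/2}}{\overline{w}}(\overline{u}f+\Delta g).
\]
Now $\overline{u}f+\Delta g\in L^{2}\ominus H^{2}$ is precisely the statement $x\in\mathbf{H}_{u}$, and multiplication by $1/\overline{w}$ keeps the product orthogonal to $H^{2}$: for $p\in H^{2}$ we have $\langle\overline{w}^{-1}(\overline{u}f+\Delta g),p\rangle=\langle\overline{u}f+\Delta g,w^{-1}p\rangle=0$ because $w^{-1}p\in H^{2}$. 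Thus $M_{F_{\alpha}}x\in\mathbf{H}_{u_{\alpha}}$.

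The heart of the proof is the isometry, and it is here that the subspace structure is essential: the pointwise matrix $F_{\alpha}$ is \emph{not} unitary, so $M_{F_{\alpha}}$ is not isometric on $\mathbf{K}$, and the norm identity holds only on $\mathbf{H}_{u}$. Expanding $\|M_{F_{\alpha}}(f\oplus g)\|^{2}=\|f'\|^{2}+\|g'\|^{2}$ and using $\Delta^{2}=1-|u|^{2}$, the $|g|^{2}$ contributions cancel; then, substituting $|w|^{2}=1-2\operatorname{Re}(\overline{\alpha}u)+|\alpha|^{2}|u|^{2}$ and absorbing the cross term via $\overline{w}/|w|^{2}=w^{-1}$, the discrepancy should reduce to
\[
\|M_{F_{\alpha}}(f\oplus g)\|^{2}-\|f\|^{2}-\|g\|^{2}=2\operatorname{Re}\left(\overline{\alpha}\left\langle\frac{f}{w},\,\overline{u}f+\Delta g\right\rangle\right).
\]
Since $f\in H^{2}$ and $w^{-1}\in H^{\infty}$ give $f/w\in H^{2}$, while $\overline{u}f+\Delta g\in L^{2}\ominus H^{2}$, this inner product vanishes and $M_{F_{\alpha}}$ is isometric on $\mathbf{H}_{u}$. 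I expect the bookkeeping in this expansion to be the one genuinely delicate step, precisely because the cancellation is not pointwise but takes place only after integrating against the orthogonality constraint that defines $\mathbf{H}_{u}$.

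Finally, for surjectivity I would produce the inverse by symmetry. The inverse Möbius transform satisfies $(u_{\alpha})_{-\alpha}=u$ and $1+\overline{\alpha}u_{\alpha}=(1-|\alpha|^{2})/w$, so applying the construction of $F_{\alpha}$ to the pair $(u_{\alpha},-\alpha)$ yields, after simplification, the bounded matrix
\[
G=\left[\begin{array}{cc}
(1-|\alpha|^{2})^{-1/2}w & 0\\
-(1-|\alpha|^{2})^{-1/2}\overline{\alpha}\Delta & \overline{w}/|w|
\end{array}\right],
\]
and the first two steps, applied verbatim with $u_{\alpha}$ and $-\alpha$ in place of $u$ and $\alpha$, show that $M_{G}$ maps $\mathbf{H}_{u_{\alpha}}$ isometrically into $\mathbf{H}_{u}$. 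A routine pointwise matrix multiplication then gives $GF_{\alpha}=F_{\alpha}G=I$, so $M_{G}$ is a two-sided inverse of $M_{F_{\alpha}}$. Combined with the isometry already established, this shows $M_{F_{\alpha}}$ is a surjective isometry from $\mathbf{H}_{u}$ onto $\mathbf{H}_{u_{\alpha}}$, as asserted.
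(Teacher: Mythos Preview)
Your proposal is correct, and the first and third steps match the paper's argument almost exactly (the paper also checks $\overline{u_{\alpha}}f'+\Delta_{\alpha}g'=(1-|\alpha|^{2})^{1/2}(\overline{u}f+\Delta g)/\overline{w}$, and for surjectivity it also exhibits $F_{\alpha}^{-1}$ as the $F$-matrix attached to $(u_{\alpha},-\alpha)$).

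The isometry step is where the two proofs diverge. You expand $\|M_{F_{\alpha}}(f\oplus g)\|^{2}-\|f\oplus g\|^{2}$ pointwise and reduce it to $2\operatorname{Re}\bigl(\overline{\alpha}\,\langle f/w,\overline{u}f+\Delta g\rangle\bigr)$, which vanishes by the \emph{source-side} orthogonality $\overline{u}f+\Delta g\perp H^{2}$. The paper instead uses the \emph{target-side} orthogonality: since $f_{\alpha}\oplus g_{\alpha}\in\mathbf{H}_{u_{\alpha}}$ has already been established, one may subtract the vector $w_{0}=\overline{\alpha}(1-|\alpha|^{2})^{-1/2}(u_{\alpha}f\oplus\Delta_{\alpha}f)\in\mathbf{G}_{\alpha}$ and apply Pythagoras. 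The difference $(f_{\alpha}\oplus g_{\alpha})-w_{0}$ simplifies to $((1-|\alpha|^{2})^{-1/2}f)\oplus((w/|w|)g)$, whose norm is immediately $(1-|\alpha|^{2})^{-1}\|f\|^{2}+\|g\|^{2}$, and subtracting $\|w_{0}\|^{2}=|\alpha|^{2}(1-|\alpha|^{2})^{-1}\|f\|^{2}$ gives $\|f\|^{2}+\|g\|^{2}$. The paper's route avoids the algebraic bookkeeping you flag as delicate, at the price of first having to guess the right $w_{0}$; your direct expansion is more mechanical but makes transparent \emph{why} the constraint $f\oplus g\in\mathbf{H}_{u}$ is needed, via the single vanishing inner product.
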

\begin{proof}
Suppose that $f\oplus g\in\mathbf{H}_{u}$ and thus $\overline{u}f+\Delta g\in L^{2}\ominus H^{2}$.
As noted above, the vector $f_{\alpha}\oplus g_{\alpha}=M_{F_{\alpha}}(f\oplus g)$
belongs to $\mathbf{K}_{+}$. A direct calculation shows that
\[
\overline{u_{\alpha}}f_{\alpha}+\Delta_{\alpha}g_{\alpha}=(1-|\alpha|^{2})^{1/2}(\overline{u}f+\Delta g)(1-\alpha\overline{u})^{-1},
\]
and this function belongs to $L^{2}\ominus H^{2}$ because
$\overline{u}f+\Delta g\in L^2\ominus H^2$ and $(1-\alpha\overline{u})^{-1}$
is a bounded, conjugate analytic function. We conclude that $f_{\alpha}\oplus g_{\alpha}\in\mathbf{H}_{u_{\alpha}}$.
In order to calculate the norm of $f_{\alpha}\oplus g_{\alpha}$ we
observe that 
\[
w=(\overline{\alpha}(1-|\alpha|^{2})^{-1/2}u_{\alpha}f)\oplus(\overline{\alpha}(1-|\alpha|^{2})^{-1/2}\Delta_{\alpha}f)\in\mathbf{G}_{\alpha}
\]
 and thus 
\[
\|f_{\alpha}\oplus g_{\alpha}\|^{2}=\|(f_{\alpha}\oplus g_{\alpha})-w\|^{2}+\|w\|^{2}=\|(f_{\alpha}\oplus g_{\alpha})-w\|^{2}+|\alpha|^{2}(1-|\alpha|^{2})^{-1}\|f\|^{2}.
\]
 Since
\[
(f_{\alpha}\oplus g_{\alpha})-w=((1-|\alpha|^{2})^{-1/2}f)\oplus((1-\overline{\alpha}u)|1-\overline{\alpha}u|^{-1}g),
\]
it follows that $\|(f_{\alpha}\oplus g_{\alpha})-w\|^{2}=(1-|\alpha|^{2})^{-1}\|f\|^{2}+\|g\|^{2}$
and hence that $\|f_{\alpha}\oplus g_{\alpha}\|^{2}=\|f\oplus g\|^{2}.$
The fact that $M_{F_{\alpha}}$ maps $\mathbf{H}_{u}$ onto $\mathbf{H}_{u_{\alpha}}$
follows from the above considerations applied to the operator $M_{F_{\alpha}}^{-1}$
because
\[
F_{\alpha}^{-1}=\left[\begin{array}{cc}
(1-|\alpha|^{2})^{1/2}(1+\overline{\alpha}u_{\alpha}){}^{-1} & 0\\
\overline{\alpha}\Delta_{\alpha}|1-\overline{\alpha}u_{\alpha}|^{-1} & (1-\overline{\alpha}u_{\alpha})|1-\overline{\alpha}u_{\alpha}|^{-1}
\end{array}\right],
\]
and $u=(u_{\alpha}+\alpha)(1+\overline{\alpha}u_{\alpha})^{-1}$.
\end{proof}
We denote by $V_{\alpha}\in\mathcal{B}(\mathbf{H}_{u},\mathbf{H}_{u_{\alpha}})$
the unitary operator defined by $V_{\alpha}x=M_{F_{\alpha}}x$, $x\in\mathbf{H}_{u}$.
In the case in which $u$ is inner, $V_{\alpha}$ is precisely the
operator constructed in \cite{crof}.
\begin{prop}
\label{prop:TTO preserved by Crofoot}An operator $T\in\mathcal{B}(\mathbf{H}_{u})$
is a truncated multiplication operator if and only if $V_{\alpha}TV_{\alpha}^{*}\in\mathcal{B}(\mathbf{H}_{u_{\alpha}})$
is a truncated multiplication operator. Thus, $\mathcal{T}_{u_{\alpha}}=\{V_{\alpha}TV_{\alpha}^{*}:T\in\mathcal{T}_{u}\}$.\end{prop}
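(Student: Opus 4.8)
The plan is to reduce everything to the intrinsic description of truncated multiplication operators given by Theorem \ref{teorema principala}: a bounded operator on $\mathbf{H}_{u}$ belongs to $\mathcal{T}_{u}$ if and only if it is $S_{u}$-invariant, which after polarizing (\ref{eq:invariance-one}) means $\langle Tx,y\rangle=\langle TUx,Uy\rangle$ for all $x,y\in\mathbf{H}_{u}\ominus\mathbb{C}\widetilde{k}_{0}$. Since this condition refers only to the ambient unitary $U$ and to the hyperplane $\mathbf{H}_{u}\ominus\mathbb{C}\widetilde{k}_{0}$, and since $V_{\alpha}$ is the restriction to $\mathbf{H}_{u}$ of the multiplication operator $M_{F_{\alpha}}$, which commutes with $U$, I would transport this invariance identity from $\mathbf{H}_{u}$ to $\mathbf{H}_{u_{\alpha}}$ by conjugating with $V_{\alpha}$, and check that it becomes precisely the $S_{u_{\alpha}}$-invariance identity.

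First I would record two facts. Because $F_{\alpha}$ and the inverse $F_{\alpha}^{-1}$ exhibited in the proof of Proposition \ref{prop:crofoot} both have bounded measurable entries, the operators $M_{F_{\alpha}}$ and $M_{F_{\alpha}^{-1}}$ commute with $U$; and since $F_{\alpha}^{-1}F_{\alpha}$ is the identity matrix and $V_{\alpha}\mathbf{H}_{u}=\mathbf{H}_{u_{\alpha}}$, the restriction $M_{F_{\alpha}^{-1}}|\mathbf{H}_{u_{\alpha}}$ inverts $V_{\alpha}$, so $V_{\alpha}^{*}=M_{F_{\alpha}^{-1}}|\mathbf{H}_{u_{\alpha}}$ also commutes with $U$. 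Second, I would use the observation made earlier that $\mathbf{H}_{u}\ominus\mathbb{C}\widetilde{k}_{0}^{u}=\{x\in\mathbf{H}_{u}:Ux\in\mathbf{H}_{u}\}$, together with its analogue for $u_{\alpha}$. These combine at once: if $x\in\mathbf{H}_{u}$ and $Ux\in\mathbf{H}_{u}$, then $UV_{\alpha}x=V_{\alpha}Ux\in V_{\alpha}\mathbf{H}_{u}=\mathbf{H}_{u_{\alpha}}$, so $V_{\alpha}$ carries $\mathbf{H}_{u}\ominus\mathbb{C}\widetilde{k}_{0}^{u}$ into $\mathbf{H}_{u_{\alpha}}\ominus\mathbb{C}\widetilde{k}_{0}^{u_{\alpha}}$; applying the same reasoning to $V_{\alpha}^{*}$ gives the reverse inclusion. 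Thus $V_{\alpha}$ restricts to a unitary bijection between these two hyperplanes which intertwines $U$.

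With these in hand the conclusion is a direct computation. For $x',y'\in\mathbf{H}_{u_{\alpha}}\ominus\mathbb{C}\widetilde{k}_{0}^{u_{\alpha}}$, set $x=V_{\alpha}^{*}x'$ and $y=V_{\alpha}^{*}y'$, which lie in $\mathbf{H}_{u}\ominus\mathbb{C}\widetilde{k}_{0}^{u}$. Then $\langle(V_{\alpha}TV_{\alpha}^{*})x',y'\rangle=\langle Tx,y\rangle$, while $\langle(V_{\alpha}TV_{\alpha}^{*})Ux',Uy'\rangle=\langle TV_{\alpha}^{*}Ux',V_{\alpha}^{*}Uy'\rangle=\langle TUx,Uy\rangle$ because $V_{\alpha}^{*}U=UV_{\alpha}^{*}$ on the hyperplane. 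As $x',y'$ sweep out the $u_{\alpha}$-hyperplane, the vectors $x,y$ sweep out the $u$-hyperplane, so $V_{\alpha}TV_{\alpha}^{*}$ satisfies the $S_{u_{\alpha}}$-invariance identity if and only if $T$ satisfies the $S_{u}$-invariance identity. Theorem \ref{teorema principala}, applied on $\mathbf{H}_{u_{\alpha}}$ and on $\mathbf{H}_{u}$ respectively, then gives $T\in\mathcal{T}_{u}$ if and only if $V_{\alpha}TV_{\alpha}^{*}\in\mathcal{T}_{u_{\alpha}}$, and the equality $\mathcal{T}_{u_{\alpha}}=\{V_{\alpha}TV_{\alpha}^{*}:T\in\mathcal{T}_{u}\}$ is immediate.

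The step I expect to require the most care is the identification $V_{\alpha}^{*}=M_{F_{\alpha}^{-1}}|\mathbf{H}_{u_{\alpha}}$, that is, confirming that the Hilbert-space adjoint of $V_{\alpha}$ is again the restriction of a multiplication operator commuting with $U$, rather than some a priori unrelated unitary; everything downstream depends on this. Fortunately it is supplied by the explicit formula for $F_{\alpha}^{-1}$ obtained in Proposition \ref{prop:crofoot}, after which the argument is purely formal bookkeeping with the two distinguished hyperplanes.
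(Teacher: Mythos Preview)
Your proof is correct and follows essentially the same approach as the paper: both use that $V_{\alpha}$ is the restriction of a bounded multiplication operator commuting with $U$, so that $Ux\in\mathbf{H}_{u}$ if and only if $UV_{\alpha}x\in\mathbf{H}_{u_{\alpha}}$, and then transport the invariance identity via Theorem~\ref{teorema principala}. The paper's version is slightly more economical in that it works directly with the unpolarized form $\langle T_{\alpha}V_{\alpha}x,V_{\alpha}x\rangle=\langle Tx,x\rangle$ and thereby avoids explicitly identifying $V_{\alpha}^{*}$ with $M_{F_{\alpha}^{-1}}|\mathbf{H}_{u_{\alpha}}$, but the substance is the same.
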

\begin{proof}
Fix $T\in\mathcal{B}(\mathbf{H}_{u})$ and define $T_{\alpha}=V_{\alpha}TV_{\alpha}^{*}$,
so
\[
\langle T_{\alpha}V_{\alpha}x,V_{\alpha}x\rangle=\langle Tx,x\rangle,\quad x\in\mathbf{H}_{u}.
\]
 Since $M_{F_{\alpha}}U=UM_{F_{\alpha}}$, it follows that $Ux\in\mathbf{H}_{u}$
if and only if $UV_{\alpha}x\in\mathbf{H}_{u_{\alpha}}$. We conclude
from the preceding identity that $T$ is $U$-invariant if and only
if $T_{\alpha}$ is $U$-invariant. The proposition follows from Theorem
\ref{teorema principala}.
\end{proof}

\section{\label{sec:Complex-symmetries}Complex symmetries}

Suppose that $\mathcal{H}$ is a (complex) Hilbert space. A map $C:\mathcal{H}\to\mathcal{H}$
is called a \emph{conjugation }if it is conjugate linear, isometric,
and $C^{2}=I_{\mathcal{H}}$. A bounded operator $T\in\mathcal{B}(\mathcal{H})$
is said to be $C$-\emph{symmetric} 
(respectively, $C$-\emph{skew-symmetric})
if $CTC=T^{*}$ (respectively, $CTC=-T^{*}$). The operator $T$ is
said to be \emph{complex symmetric }if it is\emph{ $C$-symmetric
}for some conjugation $C$.\emph{ }An operator $T$ can be complex
symmetric relative to several conjugations. For instance, suppose
that $U\in\mathcal{B}(L^{2})$ is the bilateral shift, that is, $Uf=\chi f$,
$f\in L^{2}$. Given an arbitrary function $v\in L^{\infty}$ such
that $|v|=1$ almost everywhere, the formula
\[
C_{v}f=v\overline{f},\quad f\in L^{2},
\]
defines a conjugation on $L^{2}$ such that $U$ is $C_{v}$-symmetric.
(It easy to see that these are all the conjugations relative to which
$U$ is symmetric.)
\begin{prop}
\label{prop:symmetries, not unique} Suppose that $T\in\mathcal{B}(\mathcal{H})$,
and $C$ and $D$ are two symmetries such that $T$ is both $C$-symmetric
and $D$-symmetric. Then at least one of the following is true\emph{:}
\begin{enumerate}
\item [\rm(1)]There exists a constant $\gamma\in\mathbb{T}$ such that
$D=\gamma C$.
\item [\rm(2)]There exists a proper reducing subspace $\mathcal{K}$ for
$T$ such that both $T|\mathcal{K}$ and $T|\mathcal{K}^{\perp}$
are complex symmetric.
\end{enumerate}
\end{prop}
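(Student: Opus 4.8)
The plan is to encode the two conjugations in a single unitary operator and read off the dichotomy from its spectral decomposition. Set $W = DC$. Since $C$ and $D$ are conjugate-linear isometric involutions, $W$ is a linear isometry with inverse $CD$, hence a unitary operator, and $W^{*} = W^{-1} = CD$. The first step is to show that $W$ commutes with $T$. Using the identity $(DXD)^{*} = DX^{*}D$, valid for any bounded $X$ because $D$ is a conjugation, the relation $DTD = T^{*}$ gives $DT^{*}D = (DTD)^{*} = T$. Therefore $WTW^{-1} = DCTCD = D(CTC)D = DT^{*}D = T$, so $WT = TW$.

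Second, I would record how $C$ interacts with $W$. A direct computation gives $CWC = C(DC)C = CD = W^{*}$. The map $X \mapsto CXC$ is a conjugate-linear, multiplicative, $*$-preserving, unital map carrying $W$ to $W^{*}$; applied through the Borel functional calculus of the unitary $W$, it sends $f(W)$ to $\overline{f}(W)$, where $\overline{f}(\lambda) = \overline{f(\lambda)}$. In particular, for the spectral measure $E$ of $W$ and any Borel set $\sigma \subseteq \mathbb{T}$, taking $f = \mathbf{1}_{\sigma}$ (which is real-valued, so $\overline{f} = f$) yields $CE(\sigma)C = E(\sigma)$. Thus $C$ commutes with every spectral projection of $W$, and, since $W$ commutes with $T$, so does $T$.

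The conclusion then follows from a dichotomy on the spectrum of $W$. If $W$ is a scalar multiple of the identity, say $W = \gamma I$ with $\gamma \in \mathbb{T}$ (forced, since $W$ is unitary), then $D = WC = \gamma C$, which is alternative (1). Otherwise the spectrum of $W$ is not a single point, so there is a Borel set $\sigma \subseteq \mathbb{T}$ with $0 \ne E(\sigma) \ne I$. Put $\mathcal{K} = E(\sigma)\mathcal{H}$. Because $T$ commutes with the orthogonal projection $E(\sigma)$, the subspace $\mathcal{K}$ is a proper reducing subspace for $T$; because $C$ commutes with $E(\sigma)$, it maps $\mathcal{K}$ and $\mathcal{K}^{\perp} = E(\mathbb{T}\setminus\sigma)\mathcal{H}$ into themselves, so its restrictions $C|\mathcal{K}$ and $C|\mathcal{K}^{\perp}$ are conjugations. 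Restricting the identity $CTC = T^{*}$ to each of these reducing subspaces shows that $T|\mathcal{K}$ and $T|\mathcal{K}^{\perp}$ are complex symmetric, which is alternative (2).

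The main obstacle is the second step: transferring the single relation $CWC = W^{*}$ to commutation with the whole spectral measure. Since $C$ is only conjugate-linear, one cannot invoke the ordinary (linear) functional calculus directly; the key observation is that the conjugation $\lambda \mapsto \overline{\lambda}$ on $\mathbb{T}$ induced by $C$ fixes every real-valued function, and in particular every spectral indicator, so the spectral projections are left \emph{invariant} rather than being permuted. Once this is in place, the remaining algebra is elementary.
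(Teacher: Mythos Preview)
Your proof is correct and follows essentially the same route as the paper's: form the unitary $W=DC$, check that $W$ commutes with $T$ and that $CWC=W^{*}$, and then read off the dichotomy from the spectral decomposition of $W$. The only difference is one of detail: the paper simply asserts that each spectral projection $E_W(\omega)$ is $C$-symmetric, whereas you spell out why the conjugate-linear map $X\mapsto CXC$ carries $f(W)$ to $\overline{f}(W)$ and hence fixes every real-valued Borel function of $W$.
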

\begin{proof}
Suppose that (1) is not true, and therefore the operator $V=DC$ is
not a scalar multiple of $I_{\mathcal{H}}$. The operator $V$ is
unitary and
\[
VT=DCT=DT^{*}C=TDC=TV.
\]
Moreover, we have
\[
CVC=CD=(DC)^{-1}=V^{-1}=V^{*},
\]
so $V$ is $C$-symmetric. If $E_{V}$ denotes the spectral measure
of $V$, it follows that $E_{V}(\omega)$ is also $C$-symmetric for
every Borel set $\omega\subset\mathbb{T}$, and therefore $E(\omega)TE(\omega)$
is also $C$-symmetric. To show that (2) is true, simply choose $\omega$
such that $0\ne E(\omega)\ne I_{\mathcal{H}}$ and set $\mathcal{K}=E(\omega)\mathcal{H}$.
Then $T|\mathcal{K}$ is $C|\mathcal{K}$-symmetric and $T|\mathcal{K}^{\perp}$
is $C|\mathcal{K}^{\perp}$-symmetric.
\end{proof}
Given a function $u\in H^{\infty}$ such that $\|u\|_{\infty}\le1$
and $|u(0)|<1$, the operator $S_{u}$ does not have any nontrivial
reducing subspaces unless $u=0$. For $u=0$, $S_{u}$ has exactly
one pair of complementary nontrivial reducing subspaces, and the restrictions
of $S_{u}$ to these spaces are a unilateral shift and the adjoint
of a unilateral shift, neither of which is complex symmetric relative to any conjugation. It follows
that, up to a constant multiple of modulus one, there is at most one
conjugation $C$ such that $S_{u}$ is $C$-symmetric. If $u$ is
inner or, more generally, if $u$ is an extreme point of the unit
ball of $H^{\infty}$, it follows from \cite{lot-sar} that $S_{u}$
is complex symmetric (see also \cite{garcia-p}). More general results
about functional models \cite{che-fri-tim} show that $S_{u}$ is
always complex symmetric. We describe below the essentially unique
conjugation $C_{u}$ such that $S_{u}$ is $C_{u}$-symmetric.

The spaces $\mathbf{K},\mathbf{G}$, and $\mathbf{H}_{u}$ in the
following statement were defined in Section \ref{sec:Preliminaries}.
\begin{prop}
Let $u\in H^{\infty}$ be such that $\|u\|_{\infty}\le1$ and $|u(0)|<1$.
Then the operator $C:\mathbf{K}\to\mathbf{K}$ defined by
\[
C(f\oplus g)=(\overline{\chi}u\overline{f}+\overline{\chi}\Delta\overline{g})\oplus(\overline{\chi}\Delta\overline{f}-\overline{\chi}\overline{u}\overline{g}),\quad f\oplus g\in\mathbf{K},
\]
is a conjugation such that $U$ is $C$-symmetric. Moreover, we have
$C\mathbf{H}_{u}=\mathbf{H}_{u}$ and the operator $C_{u}=C|\mathbf{H}_{u}$
is a conjugation such that $S_{u}$ is $C_{u}$-symmetric.\end{prop}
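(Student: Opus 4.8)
The plan is to rewrite the defining formula as multiplication by a pointwise unitary matrix and then to exploit the single identity $|u|^{2}+\Delta^{2}=1$ (valid a.e.\ on $\mathbb{T}$) throughout. Setting
\[
W=\left[\begin{array}{cc}
u & \Delta\\
\Delta & -\overline{u}
\end{array}\right],
\]
the definition reads $C(f\oplus g)=\overline{\chi}\,W(\overline{f}\oplus\overline{g})$ for $f\oplus g\in\mathbf{K}$. The identity $|u|^{2}+\Delta^{2}=1$ makes $W$ pointwise unitary, $W^{*}W=WW^{*}=I$, and $W$ is symmetric ($W^{T}=W$), so that $\overline{W}=W^{*}$. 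Conjugate-linearity of $C$ is clear. I would first check that $C$ maps $\mathbf{K}$ into $\mathbf{K}$: the first component is obviously in $L^{2}$, and the second lies in $(\Delta L^{2})^{-}$ because both $\Delta\overline{f}$ and $\overline{u}\overline{g}$ vanish a.e.\ on $\{\Delta=0\}$ (using $g\in(\Delta L^{2})^{-}$). Isometry then follows instantly, since $|\overline{\chi}|=1$ and $W$ is pointwise unitary give $\|C(f\oplus g)\|^{2}=\int|W(\overline{f}\oplus\overline{g})|^{2}\,dm=\|f\oplus g\|^{2}$; and $C^{2}=I$ follows from $C^{2}(f\oplus g)=W\overline{W}(f\oplus g)=WW^{*}(f\oplus g)=f\oplus g$. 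Thus $C$ is a conjugation.

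Next I would verify $CUC=U^{*}$ through the equivalent intertwining $CU=U^{*}C$. Since $U$ is multiplication by $\chi$ and $\chi$ commutes with $W$, both $CU(f\oplus g)$ and $U^{*}C(f\oplus g)$ reduce to $\overline{\chi}^{2}W(\overline{f}\oplus\overline{g})$, giving $CU=U^{*}C$ and hence $CUC=U^{*}C^{2}=U^{*}$.

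The heart of the argument is the invariance $C\mathbf{H}_{u}=\mathbf{H}_{u}$; since $C^{2}=I$ it suffices to prove $C\mathbf{H}_{u}\subseteq\mathbf{H}_{u}$. Fix $f\oplus g\in\mathbf{H}_{u}$, so $f\in H^{2}$ and $\overline{u}f+\Delta g\in L^{2}\ominus H^{2}$, and write $f'\oplus g'=C(f\oplus g)$. I must check that $f'\oplus g'\in\mathbf{K}_{+}$ and that it satisfies the membership criterion $\overline{u}f'+\Delta g'\in L^{2}\ominus H^{2}$. For the first, $f'=\overline{\chi}\,\overline{(\overline{u}f+\Delta g)}$; as $\overline{u}f+\Delta g$ has only strictly negative Fourier frequencies, its conjugate has only strictly positive ones, and multiplication by $\overline{\chi}$ leaves nonnegative frequencies, so $f'\in H^{2}$ (the component $g'\in(\Delta L^{2})^{-}$ by the range check above). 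For the criterion, the row identity $\left[\begin{array}{cc}\overline{u}&\Delta\end{array}\right]W=\left[\begin{array}{cc}1&0\end{array}\right]$, again from $|u|^{2}+\Delta^{2}=1$, collapses the expression to $\overline{u}f'+\Delta g'=\overline{\chi}\,\overline{f}$; since $f\in H^{2}$, this has frequencies $\le-1$ and so belongs to $L^{2}\ominus H^{2}$. Hence $C(f\oplus g)\in\mathbf{H}_{u}$. I expect this step to be the main obstacle, as it is the only place that uses the intrinsic description of $\mathbf{H}_{u}$ and requires careful tracking of Fourier supports.

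Finally, $C_{u}=C|\mathbf{H}_{u}$ is conjugate-linear, isometric, and an involution, hence a conjugation on $\mathbf{H}_{u}$. Because $C$ is an isometric conjugation preserving $\mathbf{H}_{u}$, it also preserves $\mathbf{H}_{u}^{\perp}$, so $CP_{\mathbf{H}_{u}}=P_{\mathbf{H}_{u}}C$. Using $S_{u}=P_{\mathbf{H}_{u}}U|\mathbf{H}_{u}$ and $S_{u}^{*}=P_{\mathbf{H}_{u}}U^{*}|\mathbf{H}_{u}$, for $x\in\mathbf{H}_{u}$ I would compute $C_{u}S_{u}C_{u}x=CP_{\mathbf{H}_{u}}UCx=P_{\mathbf{H}_{u}}(CUC)x=P_{\mathbf{H}_{u}}U^{*}x=S_{u}^{*}x$, which shows that $S_{u}$ is $C_{u}$-symmetric and completes the proof.
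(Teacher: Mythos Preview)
Your proof is correct and follows essentially the same approach as the paper: write $C$ as complex conjugation followed by multiplication by $\overline{\chi}W$, use pointwise unitarity of $W$ for the isometry and involution, verify $CU=U^{*}C$, and deduce $C_{u}S_{u}C_{u}=S_{u}^{*}$ from the commutation of $C$ with $P_{\mathbf{H}_{u}}$. The only noteworthy difference is in the invariance step: the paper shows $C(\mathbf{H}_{u}^{\perp})=\mathbf{H}_{u}^{\perp}$ by computing $C(uf\oplus\Delta f)=\overline{\chi}\,\overline{f}\oplus 0$ and observing that $C$ swaps $\mathbf{G}$ with $(L^{2}\ominus H^{2})\oplus\{0\}$, whereas you verify $C\mathbf{H}_{u}\subseteq\mathbf{H}_{u}$ directly via the membership criterion $\overline{u}f'+\Delta g'=\overline{\chi}\,\overline{f}\in L^{2}\ominus H^{2}$; both routes rest on the same row identity $[\,\overline{u}\ \ \Delta\,]W=[\,1\ \ 0\,]$ and are of comparable length.
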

\begin{proof}
The operator $C$ is simply complex conjugation followed by multiplication
by the matrix function
\[
\overline{\chi}\left[\begin{array}{cc}
u & \Delta\\
\Delta & -\overline{u}
\end{array}\right].
\]
It is easily seen that the matrix
\[
\left[\begin{array}{cc}
u(\zeta) & \Delta(\zeta)\\
\Delta(\zeta) & -\overline{u(\zeta)}
\end{array}\right]
\]
is unitary for $\zeta\in\mathbb{T}$, and thus $C$ is an isometry.
The operator $C^{2}$ is the multiplication operator by the matrix
function
\[
\left[\begin{array}{cc}
\overline{u} & \Delta\\
\Delta & -u
\end{array}\right]\left[\begin{array}{cc}
u & \Delta\\
\Delta & -\overline{u}
\end{array}\right]=\left[\begin{array}{cc}
1 & 0\\
0 & 1
\end{array}\right],
\]
and thus $C^{2}=I_{\mathbf{K}}.$ The identity $U^{*}C=CU$ is also
immediate. Observe next that
\[
C(uf\oplus\Delta f)=\overline{\chi}\overline{f}\oplus0,\quad f\in H^{2},
\]
which shows that $C(\mathbf{G})=H^{2}\oplus\{0\}$, and thus $C(H^{2}\oplus\{0\})=\mathbf{G}$
as well. We conclude that $C(\mathbf{H}_{u}^{\perp})=\mathbf{H}_{u}^{\perp}$,
$C(\mathbf{H}_{u})=\mathbf{H}_{u}$, and $C_{u}$ is indeed a conjugation
on $\mathbf{H}_{u}$. Finally,
\[
S_{u}C_{u}=P_{\mathbf{H}_{u}}UC|\mathbf{H}_{u}=P_{\mathbf{H_{u}}}CU^{*}|\mathbf{H}_{u}=CP_{\mathbf{H}_{u}}U^{*}|\mathbf{H}_{u}=C_{u}S_{u}^{*},
\]
showing that $S_{u}$ is $C_{u}$-symmetric.
\end{proof}
We note for further use the equality
\begin{equation}
C_{u}k_{0}=\widetilde{k}_{0}.\label{eq:C of k_0}
\end{equation}

The linear manifold $\mathbf{K}^{\infty}$ is invariant under the
conjugation $C$. It is not the case that every multiplication operator
$M_{F}$ satisfies the equation $M_{F}v=CM_{F^{*}}Cv$ for every $v\in\mathbf{K}^{\infty}$.
\begin{prop}
\label{prop:MF symmetric}The multiplication operator $M_{F}$ by
the matrix function 
\[
F=\left[\begin{array}{cc}
a & b\\
c & d
\end{array}\right]
\]
satisfies the equation $M_{F}=CM_{F^{*}}C|\mathbf{K}^{\infty}$ if
and only if the equality
\begin{equation}
\Delta(d-a)=-uc-\overline{u}b\label{eq:C-symmetric matrix}
\end{equation}
holds almost everywhere on $\{\zeta\in\mathbb{T}:\Delta(\zeta)\ne0\}$.\end{prop}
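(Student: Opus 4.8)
The plan is to recognize $C$ as complex conjugation composed with multiplication by a unitary matrix function, which turns $CM_{F^{*}}C$ into an honest multiplication operator whose symbol I can compute. Writing $\bar v$ for the entrywise complex conjugate of $v$, the definition of $C$ reads $Cv=\overline{\chi}W\bar v$, where
\[
W=\left[\begin{array}{cc} u & \Delta\\ \Delta & -\overline{u}\end{array}\right].
\]
Using that $C$ preserves $\mathbf{K}^{\infty}$, that entrywise conjugation sends $M_{F^{*}}$ to $M_{\overline{F^{*}}}$ with $\overline{F^{*}}=F^{T}$, and that $\overline{\overline{\chi}}=\chi$ and $\overline{\chi}\chi=1$ on $\mathbb{T}$, a direct substitution gives, for every $v\in\mathbf{K}^{\infty}$,
\[
CM_{F^{*}}Cv=\overline{\chi}W\,\overline{\bigl(\overline{\chi}F^{*}W\bar v\bigr)}=WF^{T}\overline{W}\,v.
\]
Thus $CM_{F^{*}}C=M_{G}$ on $\mathbf{K}^{\infty}$, where $G=WF^{T}\overline{W}$, and the proposition reduces to deciding when $M_{F}=M_{G}$ on $\mathbf{K}^{\infty}$.

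Next I would translate the operator equality $M_{F}=M_{G}$ into pointwise conditions on the entries, keeping careful track of the exceptional set $\{\zeta:\Delta(\zeta)=0\}$. Because a vector $f\oplus g\in\mathbf{K}^{\infty}$ has $g\in L^{\infty}\cap(\Delta L^{2})^{-}$, the function $g$ vanishes a.e. on $\{\Delta=0\}$, and because the second coordinate of the output again lies in $(\Delta L^{2})^{-}$, only the restrictions to $\{\Delta\ne0\}$ of the $(1,2)$, $(2,1)$, and $(2,2)$ entries of $G-F$ are relevant; the $(1,1)$ entry, however, must agree a.e. on all of $\mathbb{T}$, since the first coordinate of the output ranges over $L^{2}$. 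Letting $f$ and $g$ run through $L^{\infty}$ and $L^{\infty}\cap(\Delta L^{2})^{-}$ respectively shows that $M_{F}=M_{G}$ on $\mathbf{K}^{\infty}$ if and only if $(G-F)_{11}=0$ a.e. on $\mathbb{T}$ and $(G-F)_{12}=(G-F)_{21}=(G-F)_{22}=0$ a.e. on $\{\Delta\ne0\}$.

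The heart of the matter is then a short matrix computation. Carrying out the product $G=WF^{T}\overline{W}$ and substituting $\Delta^{2}=1-|u|^{2}$ throughout, I expect the difference to collapse into the factorization
\[
G-F=R\left[\begin{array}{cc}\Delta & -u\\ -\overline{u} & -\Delta\end{array}\right],\qquad R=\Delta(d-a)+uc+\overline{u}b.
\]
Granting this identity, the equivalence is immediate: on $\{\Delta\ne0\}$ the $(1,1)$ entry $\Delta R$ vanishes if and only if $R=0$, and $R=0$ forces the remaining three entries $-uR$, $-\overline{u}R$, $-\Delta R$ to vanish as well; on $\{\Delta=0\}$ the $(1,1)$ entry $\Delta R$ vanishes automatically, imposing no constraint. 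Hence all four entrywise conditions funnel through the single scalar equation $R=0$ a.e. on $\{\Delta\ne0\}$, which is precisely \eqref{eq:C-symmetric matrix}.

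The only genuinely delicate point is this last bookkeeping: I must verify that the four entrywise conditions, which a priori are tested on different subsets of $\mathbb{T}$, all reduce to the same factor $R$, and that the $(1,1)$ condition, the only one required on the full circle, is automatically met on $\{\Delta=0\}$ precisely because its entry carries an extra factor of $\Delta$. The algebra producing the displayed factorization of $G-F$ is routine but must be executed carefully, repeatedly using $\Delta^{2}=1-|u|^{2}$ to extract the common factor $R$ from each of the four entries.
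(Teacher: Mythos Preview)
Your proposal is correct and follows essentially the same route as the paper: both compute $CM_{F^{*}}C$ as multiplication by $WF^{T}\overline{W}$ and show that $G-F$ factors as $h$ times the matrix $\left[\begin{smallmatrix}\Delta & -u\\ -\overline{u} & -\Delta\end{smallmatrix}\right]$ with $h=\Delta(d-a)+uc+\overline{u}b$. Your explicit bookkeeping about which entries must vanish on all of $\mathbb{T}$ versus only on $\{\Delta\ne0\}$ is in fact more careful than the paper, which handles this step by a terse (and slightly loose) reference to Proposition~\ref{prop:symbols for the zero operator}(1).
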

\begin{proof}
The operator $CM_{F^{*}}C|\mathbf{K}^{\infty}$ is the operator of
multiplication by the matrix
\[
\left[\begin{array}{cc}
u & \Delta\\
\Delta & -\overline{u}
\end{array}\right]\left[\begin{array}{cc}
a & c\\
b & d
\end{array}\right]\left[\begin{array}{cc}
\overline{u} & \Delta\\
\Delta & -u
\end{array}\right],
\]
and a calculation shows that
\[
\left[\begin{array}{cc}
u & \Delta\\
\Delta & -\overline{u}
\end{array}\right]\left[\begin{array}{cc}
a & c\\
b & d
\end{array}\right]\left[\begin{array}{cc}
\overline{u} & \Delta\\
\Delta & -u
\end{array}\right]-F=\left[\begin{array}{cc}
\Delta h & -uh\\
-\overline{u}h & -\Delta h
\end{array}\right],
\]
where
\[
h=\Delta(d-a)+uc+\overline{u}b.
\]
The desired conclusion follows from Proposition \ref{prop:symbols for the zero operator}(1).\end{proof}
\begin{cor}
\label{cor:AF symmetric}If the matrix 
\[
F=\left[\begin{array}{cc}
a & b\\
c & d
\end{array}\right]
\]
 satisfies the equality $\Delta(a-d)=uc+\overline{u}b$ almost everywhere
on $\{\zeta\in\mathbb{T}:\Delta(\zeta)\ne0\}$, then $A_{F}=C_{u}A_{F^{*}}C_{u}|\mathbf{H}_{u}^{\infty}$.
\end{cor}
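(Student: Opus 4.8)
The plan is to reduce the statement about the compression $A_F$ to the symmetry relation for the full multiplication operator $M_F$ already furnished by Proposition \ref{prop:MF symmetric}. First I note that the hypothesis $\Delta(a-d)=uc+\overline{u}b$ is, after multiplying through by $-1$, precisely the identity (\ref{eq:C-symmetric matrix}). Hence Proposition \ref{prop:MF symmetric} applies to $F$ and yields $M_F=CM_{F^*}C$ on $\mathbf{K}^\infty$; in particular
\[
CM_{F^*}Cv=M_Fv,\quad v\in\mathbf{H}_u^\infty\subset\mathbf{K}^\infty.
\]

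Two elementary properties of the conjugation $C$ are needed to pass from $M_F$ to $A_F=P_{\mathbf{H}_u}M_F|\mathbf{H}_u^\infty$. The first is that $C$ leaves $\mathbf{H}_u^\infty$ invariant: since $C\mathbf{H}_u=\mathbf{H}_u$ by the preceding proposition and $\mathbf{K}^\infty$ is invariant under $C$ as recorded in the text, $C$ maps $\mathbf{H}_u^\infty=\mathbf{H}_u\cap\mathbf{K}^\infty$ into itself, so that $A_{F^*}C_uv$ makes sense for $v\in\mathbf{H}_u^\infty$. The second is that $C$ commutes with the orthogonal projection $P_{\mathbf{H}_u}$; this follows because $C$ is a conjugation with $C\mathbf{H}_u=\mathbf{H}_u$ and $C\mathbf{H}_u^\perp=\mathbf{H}_u^\perp$ (both established in the proof of the previous proposition), so that decomposing a vector along $\mathbf{H}_u\oplus\mathbf{H}_u^\perp$ gives $CP_{\mathbf{H}_u}=P_{\mathbf{H}_u}C$.

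With these facts in hand the corollary becomes a one-line computation: for $v\in\mathbf{H}_u^\infty$,
\[
C_uA_{F^*}C_uv=CP_{\mathbf{H}_u}M_{F^*}Cv=P_{\mathbf{H}_u}CM_{F^*}Cv=P_{\mathbf{H}_u}M_Fv=A_Fv,
\]
where the first equality uses $C_u=C|\mathbf{H}_u$ together with the definition of $A_{F^*}$, the second uses $CP_{\mathbf{H}_u}=P_{\mathbf{H}_u}C$, and the third uses the symmetry relation from Proposition \ref{prop:MF symmetric}.

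I do not anticipate a serious obstacle, since all of the real content is carried by Proposition \ref{prop:MF symmetric}. The only point deserving any care is the commutation $CP_{\mathbf{H}_u}=P_{\mathbf{H}_u}C$, which rests entirely on $C$ preserving both $\mathbf{H}_u$ and its orthogonal complement; as this is already contained in the proof of the preceding proposition, it requires no new work.
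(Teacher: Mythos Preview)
Your argument is correct and is exactly the natural way to deduce the corollary from Proposition~\ref{prop:MF symmetric}; the paper itself states the corollary without proof, treating it as an immediate consequence. The only minor quibble is that the facts $C\mathbf{H}_u=\mathbf{H}_u$ and $C\mathbf{H}_u^{\perp}=\mathbf{H}_u^{\perp}$ come from the proposition introducing $C$ (two propositions back), not from Proposition~\ref{prop:MF symmetric} itself, so your references to ``the preceding proposition'' should point there.
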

In the particular case in which $u$ is an inner function, the function
$\Delta$ is equal to zero almost everywhere. Thus, the preceding
corollary shows that every operator in $\mathcal{T}_{u}$ is $C_{u}$-symmetric.
This result \cite[Section 2.3]{sar-TTO} plays an important role in
the study of truncated Toeplitz operators. If $u$ is not inner, there
are operators in $\mathcal{T}_{u}$ that are not $C_{u}$-symmetric.
For instance, the operator with symbol
\[
\left[\begin{array}{cc}
0 & 0\\
0 & 1
\end{array}\right]
\]
is not $C_{u}$-symmetric.

Suppose that $T\in\mathcal{B}(\mathbf{H}_{u})$ is a truncated multiplication
operator. Then the operator $C_{u}T^{*}C_{u}$ is easily seen to be a truncated
multiplication operator as well. It follows that $T$ can be written in a
unique way as a sum $T=T_{1}+T_{2}$, where $T_{1}=(1/2)(T+C_{u}T^{*}C_{u})$
is a $C_{u}$-symmetric truncated multiplication operator and $T_{2}=(1/2)(T-C_{u}T^{*}C_{u})$
is a $C_{u}$-skew-symmetric operator. The above calculations allow
us to show that the operators $T_{1}$ and $T_{2}$ have symbols of
a special form.
\begin{prop}
\label{prop:symbols of symmetric and skew-symmetric TTO}Suppose that
$T\in\mathcal{T}_{u}$. Then\emph{:}
\begin{enumerate}
\item [\rm(1)]If $T$ is $C_{u}$-symmetric then it has a symbol of the
form
\[
\left[\begin{array}{cc}
a & b\\
c & a-(\overline{u}b+uc)/\Delta
\end{array}\right]
\]
for some $a,b,c\in L^{2}$.
\item [\rm(2)]If $T$ is $C_{u}$-skew-symmetric then it has a symbol of
the form
\[
\left[\begin{array}{cc}
-\Delta f & uf\\
\overline{u}f & \Delta f
\end{array}\right]
\]
for some $f\in L^{2}$.
\end{enumerate}
\end{prop}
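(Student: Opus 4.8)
The plan is to compute an explicit symbol for $C_uT^*C_u$ in terms of a symbol for $T$, and then to produce the symbols required in (1) and (2) by taking suitable averages. The key linear-algebra input is already contained in the proof of Proposition \ref{prop:MF symmetric}.

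First I would establish that $C_uT^*C_u$ is again a truncated multiplication operator and identify its symbol. Fix a symbol $F=\left[\begin{array}{cc}a & b\\ c & d\end{array}\right]$ for $T$, so $A_{F^*}=T^*$ on $\mathbf{H}_u^\infty$. Since $C$ preserves both $\mathbf{H}_u$ and $\mathbf{H}_u^\perp$ it commutes with $P_{\mathbf{H}_u}$, and since $C$ preserves $\mathbf{K}^\infty$ it maps $\mathbf{H}_u^\infty$ into itself; hence for $v\in\mathbf{H}_u^\infty$ one has $C_uA_{F^*}C_uv=P_{\mathbf{H}_u}(CM_{F^*}C)v$. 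By the matrix identity in the proof of Proposition \ref{prop:MF symmetric}, $CM_{F^*}C=M_G$ with $G=F+\left[\begin{array}{cc}\Delta h & -uh\\ -\overline{u}h & -\Delta h\end{array}\right]$ and $h=\Delta(d-a)+uc+\overline{u}b\in L^2$. As $C_uT^*C_u$ is bounded and coincides with $A_G$ on the dense manifold $\mathbf{H}_u^\infty$, I conclude that $C_uT^*C_u=A_G$; in particular $G$ has entries in $L^2$ because $h\in L^2$ and $u,\Delta\in L^\infty$.

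Next I would exploit linearity of $F\mapsto A_F$: any convex combination of symbols of $T$ is again a symbol of $T$. If $T$ is $C_u$-symmetric then $C_uT^*C_u=T$, so $F$ and $G$ are both symbols of $T$ and so is $\frac{1}{2}(F+G)$; if $T$ is $C_u$-skew-symmetric then $C_uT^*C_u=-T$, so $-G$ is a symbol of $T$ and so is $\frac{1}{2}(F-G)$. The skew-symmetric case is then immediate: $\frac{1}{2}(F-G)=-\frac{1}{2}(G-F)=\left[\begin{array}{cc}-\Delta f & uf\\ \overline{u}f & \Delta f\end{array}\right]$ with $f=h/2\in L^2$, which is exactly the form claimed in (2).

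For the symmetric case I would write $\frac{1}{2}(F+G)=\left[\begin{array}{cc}a' & b'\\ c' & d'\end{array}\right]$ with $a'=a+\Delta h/2$, $b'=b-uh/2$, $c'=c-\overline{u}h/2$, $d'=d-\Delta h/2$, all in $L^2$, and then verify the single scalar identity $\Delta(a'-d')=\overline{u}b'+uc'$ almost everywhere on $\{\zeta\in\mathbb{T}:\Delta(\zeta)\ne0\}$. This reduces, using $|u|^2=1-\Delta^2$ on that set together with the definition of $h$, to the algebraic cancellation $(\overline{u}b+uc)-h=\Delta(a-d)$, which holds by construction. The identity is equivalent to $d'=a'-(\overline{u}b'+uc')/\Delta$ on $\{\Delta\ne0\}$, and since the values of the $(2,2)$ entry of a symbol on $\{\Delta=0\}$ do not affect the operator, this is precisely the form in (1). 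I do not expect a genuine obstacle here: the only things to watch are that all entries stay in $L^2$ (guaranteed by $h\in L^2$) and that the averaging is legitimate (guaranteed by the linearity and boundedness established above); the substantive computation is the one already carried out in Proposition \ref{prop:MF symmetric}.
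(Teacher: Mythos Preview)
Your proposal is correct and follows essentially the same approach as the paper: start with an arbitrary symbol $F$ for $T$, use the matrix identity from the proof of Proposition \ref{prop:MF symmetric} to write $CM_{F^*}C=M_G$ explicitly, and then take $(F\pm G)/2$ as a new symbol whose form is read off directly. The paper states this more tersely, simply citing Proposition \ref{prop:MF symmetric} rather than rewriting the algebra, and it invokes Proposition \ref{prop:limit for uniqueness} to have the $(2,2)$ entry bounded at the outset; you correctly observe that this extra input is not actually needed for the averaging argument, since $h\in L^{2}$ already guarantees all entries of the new symbol lie in $L^{2}$.
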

\begin{proof}
By Theorem \ref{teorema principala} and Proposition \ref{prop:limit for uniqueness}, $T$ has a symbol of the form
\[
G=\left[\begin{array}{cc}
\alpha & \beta\\
\gamma & \delta
\end{array}\right]
\]
with $\alpha,\beta,\gamma\in L^{2}$ and $\delta\in L^{\infty}.$
If $T$ is $C_{u}$ symmetric, the function $F$ such that $M_{F}=(1/2)(M_{G}+CM_{G}^{*}C)$
is again a symbol for $T$ and it has the form specified in (1) by
Proposition of \ref{prop:MF symmetric}. If $T$ is $C_{u}$-skew-symmetric,
we use instead the operator $M_{H}=(1/2)(M_{G}-CM_{G}^{*}C)$. The
proof of Proposition \ref{prop:MF symmetric} shows that $H$ has
the form specified in (2).\end{proof}

\end{document}